\numberwithin{equation}{section}
\newtheorem{thm}{Theorem}[section]
\newtheorem{cor}[thm]{Corollary}
\newtheorem{lem}[thm]{Lemma}
\newtheorem{prop}[thm]{Proposition}
\theoremstyle{definition}
\newtheorem{df}[thm]{Definition}
\newtheorem{rem}[thm]{Remark}
\newtheorem{rems}[thm]{Remarks}
\newtheorem{ass}[thm]{Assumption}
\newtheorem{exem}[thm]{Example}
\newcommand{\Q}{\mathbb{Q}}
\newcommand{\N}{\mathbb{N}}
\newcommand{\Z}{\mathbb{Z}}
\newcommand{\F}{\mathbb{F}}
\newcommand{\G}{\Gamma}
\renewcommand{\L}{\Lambda}
\renewcommand{\geq}{\geqslant}
\newcommand{\Gal}{\textrm{Gal}}
\newcommand{\idl}{{\bf I}}
\newcommand{\liminv}{\displaystyle \lim_{\leftarrow}}
\newcommand{\ol}{\mathcal O}
\newcommand{\cala}{\mathcal A}
\newcommand{\calm}{\mathcal M}
\newcommand{\call}{\mathcal L}
\newcommand{\calf}{\mathcal F}
\newcommand{\pr}{\mathfrak p}
\newcommand{\gotm}{\mathfrak m}
\newcommand{\gotq}{\mathfrak q}
\newcommand{\gotP}{\mathfrak P}
\newcommand{\sri}{\twoheadrightarrow}
\newcommand{\iri}{\hookrightarrow}
\newcommand{\ri}{\rightarrow}
\newcommand{\wt}{\widetilde}
\newcommand{\wh}{\widehat}
\newcommand{\il}[1]{\lim_{\buildrel \longleftarrow\over{#1}}}
\newcommand{\Hom}{\operatorname{Hom}}
\title[Characteristic ideals and Iwasawa theory]{Characteristic ideals and Iwasawa theory}
\author[A. Bandini] {Andrea Bandini}
\address{Dipartimento di Matematica e Informatica, Universit\`a degli Studi di Parma\\
Parco Area delle Scienze, 53/A - 43124 Parma (PR), Italy}
\email{andrea.bandini@unipr.it}
\author[F. Bars] {Francesc Bars}
\address{Departament Matem\`atiques, Edif. C, Universitat Aut\`onoma de Barcelona\\
08193 Bellaterra, Catalonia}
\email{francesc@mat.uab.cat}
\thanks{F. Bars supported by MTM2013-40680-P}
\author[I. Longhi] {Ignazio Longhi}
\address{Department of Mathematical Sciences,
Xi'an Jiaotong-Liverpool University \\
111 Ren Ai Road,
Dushu Lake Higher Education Town\\
Suzhou Industrial Park, Suzhou, Jiangsu\\
215123, China}
\email{Ignazio.Longhi@xjtlu.edu.cn}
\thanks{I. Longhi supported by National Science Council of Taiwan, grant NSC100-2811-M-002-079}
\keywords{Characteristic ideals; Iwasawa theory; Krull rings; class groups}
\subjclass[2010]{11R23; 13F25}
\begin{document}

\begin{abstract}
Let $\L$ be a non-noetherian Krull domain which is the inverse limit of noetherian Krull domains $\L_d$
and let $M$ be a finitely generated $\L$-module which is the inverse limit of $\L_d$-modules $M_d\,$.
Under certain hypotheses on the rings $\L_d$ and on the
modules $M_d\,$, we define a pro-characteristic ideal for $M$ in $\L$, which should play
the role of the usual characteristic ideals for finitely generated modules over noetherian
Krull domains. We apply this to the study of Iwasawa modules (in particular of class groups) in a
non-noetherian Iwasawa algebra $\Z_p[[\Gal(\calf/F)]]$, where $F$ is a function field of characteristic
$p$ and $\Gal(\calf/F)\simeq\Z_p^\infty$.
\end{abstract}

\maketitle

\tableofcontents

\section{Introduction}
Let $A$ be a noetherian Krull domain and $M$ a finitely generated torsion $A$-module. The structure theorem for such modules provides
an exact sequence
\begin{equation} \label{e:1} 0 \longrightarrow P \longrightarrow M \longrightarrow \bigoplus_{i=1}^n A/\pr_i^{e_i} A
\longrightarrow Q \longrightarrow 0 \end{equation}
where the $\pr_i$'s are height 1 prime ideals of $A$ and $P$ and $Q$ are pseudo-null $A$-modules (for more details and precise
definitions of all the objects appearing in this Introduction see Section \ref{CharIdSec}). This sequence defines an important
invariant of the module $M$, namely its {\em characteristic ideal}
\[ Ch_A(M):= \prod_{i=1}^n \pr_i^{e_i} \ .\]

Characteristic ideals play a major role in (commutative) Iwasawa theory for global fields: they provide the algebraic counterpart
for the $p$-adic $L$-functions associated to Iwasawa modules (such as class groups or duals of Selmer groups).
Here the Krull domain one works with is the Iwasawa algebra $\Z_p[[\Gamma]]$, where $\Gamma$
is a commutative $p$-adic Lie group occurring as Galois group (we shall deal mainly with the case $\Gamma\simeq \Z_p^d$ for some
$d\in \mathbb{N}$). Even if pseudo-null modules do not contribute to characteristic ideals, they appear in the descent problem
when one wants to compare the characteristic ideal of an Iwasawa module of a $\Z_p^d$-extension with the one of a
$\Z_p^{d-1}$-extension contained in it. The last topic is particularly important when the global field has characteristic
$p$, because, in this case, extensions $\calf/F$ with $\Gal(\calf/F)\simeq \Z_p^\infty$ occur quite naturally: in this
situation the Iwasawa algebra is non-noetherian and there is no guarantee one can find a sequence such as \eqref{e:1}.
One strategy to overcome this difficulty is to consider a filtration of $\Z_p^d$-extensions for $\calf$, define
the characteristic ideals at the $\Z_p^d$-level for all $d$ and then pass to the limit.

To deal with the technical complications of inverse limits and projections of pseudo-null modules, in Section \ref{CharIdSec} we
prove the following (see Propositions \ref{pseudonull} and \ref{CharId1})

\begin{prop}\label{IntroProp}
Let $A$ be  a noetherian Krull domain and put $B:=A[[t]]$. If $M$ is a pseudo-null $B$-module, then $M_t$
(the kernel of multiplication by $t$) and $M/tM$ are finitely generated torsion $A$-modules and
\begin{equation}\label{Intro1} Ch_A(M_t)=Ch_A(M/tM) \ .\end{equation}
Moreover, for any finitely generated torsion $B$-module $N$, we have
\begin{equation}\label{Intro2} Ch_A(N_t)\pi(Ch_B(N)) = Ch_A(N/tN) \end{equation}
(where $\pi\colon B\rightarrow A$ is the canonical projection).
\end{prop}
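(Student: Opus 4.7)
The plan is to prove \eqref{Intro1} first and then deduce \eqref{Intro2} via the structure theorem. For \eqref{Intro1}, I first check that $M_t$ and $M/tM$ are finitely generated torsion $A$-modules: both carry an $A=B/(t)$-module structure (since $t$ acts trivially), both are finitely generated over $A$ because $M$ is finitely generated over $B$, and the pseudo-null hypothesis provides coprime $b_1,b_2\in\mathrm{Ann}_B(M)$; at least one of them, say $b_1$, lies outside $(t)$, so $\pi(b_1)\in A\setminus\{0\}$ annihilates both modules.

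The main tool is the snake lemma applied to multiplication by $t$ on a short exact sequence $0\to M'\to M\to M''\to 0$, which yields the six-term exact sequence
\[
0\to M'_t\to M_t\to M''_t\to M'/tM'\to M/tM\to M''/tM''\to 0.
\]
Multiplicativity of $Ch_A$ on exact sequences of finitely generated torsion $A$-modules makes $E(M):=Ch_A(M/tM)\cdot Ch_A(M_t)^{-1}$ multiplicative on short exact sequences. A standard prime filtration of the noetherian $B$-module $M$ with successive quotients $B/\gotP_i$ reduces \eqref{Intro1} to verifying $E(B/\gotP)=A$ for each prime $\gotP\in\mathrm{Supp}(M)$, which has height $\geq 2$ by pseudo-nullness. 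If $t\in\gotP$, then $tM=0$ gives $M_t=M=M/tM$, so $E=A$ trivially. If $t\notin\gotP$, then $t$ is a non-zero-divisor on the domain $B/\gotP$, so $M_t=0$ and $M/tM=A/\pi(\gotP)$; here $A/\pi(\gotP)$ is pseudo-null over $A$ because any height-$1$ prime $\gotq\subset A$ containing $\pi(\gotP)$ would produce a height-$2$ prime $(\gotq,t)\subset B$ strictly containing $\gotP$ (strict since $t\in(\gotq,t)\setminus\gotP$), contradicting equality of heights.

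For \eqref{Intro2}, I apply the structure theorem to $N$ and exploit that both $E$ and $\pi\circ Ch_B$ are multiplicative on short exact sequences and vanish on the pseudo-null pieces of the decomposition (the vanishing of $E$ being precisely Part~1). This reduces the assertion to the elementary modules $N=B/\pr^e$. For $\pr\neq(t)$, $t$ is a non-zero-divisor modulo $\pr^e$, so $N_t=0$ and $N/tN=A/\pi(\pr)^e$: comparing orders at each height-$1$ prime of $A$ gives $Ch_A(A/\pi(\pr)^e)=\pi(\pr)^e=\pi(Ch_B(N))$ as divisors. For $\pr=(t)$, both $(B/t^e)_t$ and $(B/t^e)/t$ are free of rank $1$ over $A$ and $\pi(Ch_B(B/t^e))=\pi(t^e)=0$, so both sides of \eqref{Intro2} vanish under the convention that $Ch_A$ of a non-torsion module is zero.

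The main obstacle is the compatibility between $\pi(Ch_B(B/\pr^e))$ — obtained by pushing a height-one prime of $B$ under $\pi$ — and the characteristic ideal $Ch_A(A/\pi(\pr)^e)$ computed directly from the module: this uses the Krull-domain identification $Ch_A(A/I)=\prod_\gotq\gotq^{v_\gotq(I)}$ (product over height-one primes of $A$) and the compatibility of the discrete valuations $v_\gotq$ with taking powers. The remaining steps — snake-lemma additivity, the prime filtration of a noetherian module, and the dimension-theoretic argument ensuring $A/\pi(\gotP)$ is pseudo-null — are routine once this bookkeeping is in place.
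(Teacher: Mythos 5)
Your treatment of \eqref{Intro2} follows essentially the same route as the paper's: structure theorem, snake lemma for multiplication by $t$, explicit computation on the elementary pieces $B/\pr^e$, with the pseudo-null pieces absorbed by \eqref{Intro1}. Your proof of \eqref{Intro1}, however, is genuinely different and, as far as I can check, correct. The paper localizes and completes: for each height-one prime $\pr$ of $A$ it base-changes along the flat map $B\to\wh{A_\pr}[[t]]$ (its Lemma \ref{l:flat}) and invokes Weierstrass division to show that $P\otimes_B\wh{A_\pr}[[t]]$ is a finitely generated $\wh{A_\pr}$-module (Lemma \ref{fingen}), after which the equality of lengths of the two end terms of the four-term sequence is automatic. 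You instead perform a d\'evissage: a prime filtration reduces the claim to cyclic modules $B/\gotP$ with $\mathrm{ht}\,\gotP\geq 2$, and the entire content becomes the single fact that $\mathrm{ht}(\gotq,t)=\mathrm{ht}(\gotq)+1=2$ for $\gotq$ of height one in $A$ (lower bound from the chain $0\subsetneq\gotq B\subsetneq(\gotq,t)$, upper bound because $t$ is a nonzerodivisor in the noetherian local ring $B_{(\gotq,t)}$ with $B_{(\gotq,t)}/(t)\simeq A_\gotq$), which forces $A/\pi(\gotP)$ to be $A$-pseudo-null when $t\notin\gotP$. Your route avoids completion and Weierstrass preparation entirely and sidesteps, by a different mechanism, the question of whether $P$ is finitely generated over $A$ (which the paper identifies as the main point of its Lemma \ref{fingen}); the price is the dimension-theoretic input just described, which you should spell out since it is the crux. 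Two small points to tighten. First, in the reduction for \eqref{Intro2}, when $(t)$ divides $Ch_B(N)$ the quantity $Ch_A(N/tN)\,Ch_A(N_t)^{-1}$ is undefined (one cannot invert the zero ideal), so this case must be handled for $N$ itself and not only for the summand $B/(t^e)$: the six-term sequence shows that $N_t$ and $N/tN$ both have positive $A$-rank, so all three terms of \eqref{Intro2} vanish. Second, the assertion that $t$ is a nonzerodivisor on $B/\pr^e$ for $\pr\neq(t)$ tacitly assumes $t$ avoids the associated primes of $\pr^e$; the paper makes the identical implicit claim, so this is not a defect of your approach relative to the original.
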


\noindent This immediately provides a criterion for an $B$-module to be pseudo-null (Corollary \ref{EqVerCharId}), but
our main application is the definition of an analogue of characteristic ideals in a non-noetherian Iwasawa algebra
(Theorem \ref{MainThm}).

\begin{thm}\label{IntroThm}
Let $\{\L_d\}_{d\geq 0}$ be an inverse system of noetherian Krull domains such that
\[ \L_d \simeq \L_{d+1}/\pr_{d+1}\ \  and\ \  \L_{d+1} \simeq \il{n} \L_{d+1}/\pr_{d+1}^n\ \
for\ any\ d\geq 0 \]
($\pr_{d+1}$ a principal prime ideal of $\L_{d+1}$ of height 1).
Let $\L:=\displaystyle{\il{d} \L_d}$ and consider a finitely generated $\L$-module
$M= \displaystyle{\il{d} M_d}$ (where each $M_d$ is a $\L_d$-module).  If, for any $d \gg 1$, \begin{itemize}
\item[{\bf 1.}] the $\pr_d$-torsion submodule of $M_d$ is a pseudo-null $\L_{d-1}$-module, i.e.,
$Ch_{\L_{d-1}}(M_d[\pr_d])=(1)$;
\item[{\bf 2.}] $Ch_{\L_{d-1}}(M_d/\pr_d)\subseteq Ch_{\L_{d-1}}(M_{d-1})$,
\end{itemize}
then we can define a {\em pro-characteristic ideal} for $M$ as
\[ \wt{Ch}_\L(M):=\il{d}\, (\pi^\L_{\L_d})^{-1}(Ch_{\L_d}(M_d)) \]
(where $\pi^\L_{\L_d}\,: \L\rightarrow \L_d$ is the natural map provided by the inverse limit defining $\L$).
\end{thm}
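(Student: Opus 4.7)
The plan is to verify that the right-hand side of the proposed definition is meaningful: each $I_d:=(\pi^\L_{\L_d})^{-1}(Ch_{\L_d}(M_d))$ is an ideal of $\L$, and the inverse limit in question amounts (once the transition maps are read off from the tower) to the intersection $\bigcap_{d\gg 1} I_d$. So the only thing I really need to check is that this sequence of ideals is descending, i.e.\ $I_d\subseteq I_{d-1}$ for all $d\gg 1$. Using the cocycle identity $\pi^\L_{\L_{d-1}}=\pi^{\L_d}_{\L_{d-1}}\circ\pi^\L_{\L_d}$, this descent is equivalent to the one-step inclusion
\[
\pi^{\L_d}_{\L_{d-1}}\bigl(Ch_{\L_d}(M_d)\bigr)\subseteq Ch_{\L_{d-1}}(M_{d-1}),
\]
an inclusion of ideals sitting entirely inside the noetherian ring $\L_{d-1}$.

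Next, I will place this one-step inclusion in the frame of Proposition \ref{IntroProp}. The structural hypotheses $\L_{d-1}\simeq\L_d/\pr_d$ and $\L_d\simeq\il{n}\L_d/\pr_d^n$, together with $\pr_d=(t_d)$ principal of height one, identify $\L_d$ with $\L_{d-1}[[t_d]]$. Applying equation \eqref{Intro2} with $B=\L_d$, $A=\L_{d-1}$, $t=t_d$ and $N=M_d$ then yields
\[
Ch_{\L_{d-1}}\bigl(M_d[\pr_d]\bigr)\cdot\pi^{\L_d}_{\L_{d-1}}\bigl(Ch_{\L_d}(M_d)\bigr)=Ch_{\L_{d-1}}(M_d/\pr_d M_d).
\]

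The two hypotheses of the theorem will then close the argument in one line each. Hypothesis {\bf 1} collapses the first factor on the left to the unit ideal, so
\[
\pi^{\L_d}_{\L_{d-1}}\bigl(Ch_{\L_d}(M_d)\bigr)=Ch_{\L_{d-1}}(M_d/\pr_d M_d),
\]
and hypothesis {\bf 2} places the right-hand side inside $Ch_{\L_{d-1}}(M_{d-1})$, giving exactly the descent I needed.

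I do not foresee any real obstacle: Proposition \ref{IntroProp} does the heavy lifting, and the two hypotheses are tailor-made to annihilate precisely the two potentially obstructive terms in \eqref{Intro2} (the pseudo-null $\pr_d$-torsion contribution via {\bf 1}, the comparison to $M_{d-1}$ via {\bf 2}). Two auxiliary points will need to be settled along the way, but should be routine: checking that each $M_d$ is actually a finitely generated torsion $\L_d$-module so that $Ch_{\L_d}(M_d)$ is defined, and noting that since the hypotheses are only assumed for $d\gg 1$ the inverse limit has to be taken starting from some sufficiently large $d_0$.
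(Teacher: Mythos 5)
Your proposal is correct and follows essentially the same route as the paper: apply Proposition \ref{CharId1} (equation \eqref{Intro2}) with $A=\L_{d-1}$, $B=\L_d$, use hypothesis \textbf{1} to kill the torsion factor and hypothesis \textbf{2} to obtain $\pi^{\L_d}_{\L_{d-1}}(Ch_{\L_d}(M_d))\subseteq Ch_{\L_{d-1}}(M_{d-1})$, which makes the system of ideals compatible and the limit well defined. The only cosmetic difference is that the paper disposes of your first ``auxiliary point'' by convention (if some $M_d$ is not torsion then $Ch_{\L_d}(M_d)=0$ and there is nothing to prove) rather than by verification.
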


The Iwasawa algebra associated with a $\Z_p^d$-extension of a global field $F$ is (noncanonically) isomorphic to the Krull
ring $\Z_p[[t_1,\dots,t_d]]$, hence descending to a $\Z_p^{d-1}$-extension corresponds to the passage from $A[[t]]$ to $A$.
So the results of Section \ref{CharIdSec} apply immediately to Iwasawa modules and, in order to keep the paper short, we
just consider the case of class groups. We also remark that the analogue of Proposition \ref{IntroProp} for these
Iwasawa algebras has been proved by T. Ochiai in \cite[Section 3]{Oc}
and it suffices for the arithmetical applications we had in mind. Indeed in Section \ref{ClGrSec} we deal with a
global field $F$ of characteristic $p>0$ (for an account of Iwasawa theory over function fields see \cite{BBL} and the
references there). We already mentioned that here one is naturally led to work with extremely large
abelian $p$-adic extensions: this comes from class field theory, since, in the completion $F_v$ of $F$ at some place $v$, the
group of 1-units $U_1(F_v)$ is isomorphic to $\Z_p^\infty\,$. As hinted above, our strategy to tackle $\calf/F$ with
$\Gal(\calf/F)\simeq \Z_p^\infty$ is to work first with $\Z_p^d$-extensions and then use limits. The usefulness of Theorem
\ref{IntroThm} in this procedure is illustrated in Section \ref{PrRamCGSubs}, where we define the pro-characteristic ideal
$\wt{Ch}_{\L}(\cala(\calf))$ dispensing with the crutch of the {\em ad hoc} hypothesis \cite[Assumption 5.3]{BBL}.
The search for a ``good'' definition for it was one of the main motivations for this work.

\noindent The arithmetic significance of our pro-characteristic ideal is ensured by a deep result of D. Burns (see \cite[Theorem 3.1]{blt09}
and the Appendix \cite{blt09A}), which shows that the characteristic ideal of the class group of a $\Z_p^d$-extension $\calf_d/F$ is generated
by a Stickelberger element (by some language abuse we shall call {\em class group of $\calf_d$} the inverse limit of the class groups
of the finite subextensions of $\calf_d/F$). Therefore (see Corollary \ref{IMCClassGr}) our pro-characteristic ideal is generated by a
Stickelberger element as well and this can be considered as an instance of Iwasawa Main Conjecture for non-noetherian Iwasawa
algebras.

Next to class groups, \cite{BBL} and \cite{BL} consider the case of Selmer groups of abelian varieties: in \cite[Section 3]{BBL}
we employed Fitting ideals of Pontrjagin duals of Selmer groups instead than characteristic ideals in order to avoid the difficulties
of taking the inverse limit. With some additional work, Theorem \ref{IntroThm} permits to define
a pro-characteristic ideal for these modules as well, allowing to formulate a more classical Iwasawa Main Conjecture
(details can be found in \cite{BBL2}).

\begin{rem}\label{IntroRem}
If a pseudo-null $A[[t]]$-module $M$ is finitely generated over $A$ as well, then the statement of Proposition
\ref{IntroProp} is trivially deduced from the exact sequence
\[ 0 \ri M_t \ri M \buildrel{t}\over\longrightarrow M \ri M/tM \ri 0 \]
and the multiplicativity of characteristic ideals. As explained in \cite{Gr4} (see Lemma 2 and the discussion right after it),
for any $\Z_p^d$-extension $\calf_d/F$ and any pseudo-null $\L(\calf_d)$-module $M$ it is always possible to find (at least one)
$\Z_p^{d-1}$-subextension $\calf_{d-1}$ such that $M$ is finitely generated over $\L(\calf_{d-1})$ (where $\L(\mathcal{L})$ is
the Iwasawa algebras associated with the extension $\mathcal{L}/F$). Our search for a
characteristic ideal via a projective limit does not allow this freedom in the choice of subextensions, hence the need for an
``unconditional'' result like Theorem \ref{IntroThm}.
\end{rem}

\section{Pseudo-null modules and characteristic ideals}\label{CharIdSec}

\subsection{Krull domains} We begin by reviewing some basic facts and definitions we are going to need.
A comprehensive reference is \cite[Chapter VII]{Bo}.

An integral domain $A$ is called a Krull domain if $A=\cap A_\pr$ (where $\pr$ varies among prime ideals of height 1 and
$A_\pr$ denotes localization), all $A_\pr$'s are discrete valuation rings and any $x\in A-\{0\}$ is a unit in $A_\pr$ for almost all $\pr$
\footnote{This is not the definition in \cite{Bo}, but it is equivalent to it: see \cite[VII, \S 1.6, Theorem 4]{Bo}.}.
In particular, one attaches a discrete valuation to any height 1 prime ideal. Furthermore, a ring is a unique
factorization domain if and only if it is a Krull domain and all height 1 prime ideals are principal (\cite[VII, \S 3.2, Theorem 1]{Bo}).

\subsubsection{Torsion modules}
Let $A$ be a noetherian Krull domain. A finitely generated torsion $A$-module is said to be {\em pseudo-null} if its annihilator
ideal has height at least 2.
A morphism with pseudo-null kernel and cokernel is called a pseudo-isomorphism: being pseudo-isomorphic is an equivalence relation
between finitely generated torsion $A$-modules (torsion is essential here \footnote{For example the map $(p,t)\iri \Z_p[[t]]$ is a
pseudo-isomorphism, but there is no such map from $\Z_p[[t]]$ to $(p,t)$.})
and we shall denote it by $\sim_A\,$. If $M$ is a finitely generated torsion $A$-module then there is a pseudo-isomorphism
\begin{equation}\label{StrThEq} M\longrightarrow  \bigoplus_{i=1}^n A/\pr_i^{e_i} \end{equation}
where the $\pr_i$'s are height 1 prime ideals of $A$ (not necessarily distinct) and the $\pr_i$'s, $n$ and the
$e_i$'s are uniquely determined by $M$ (see e.g. \cite[VII, \S 4.4, Theorem 5]{Bo}). A module like the one on the right-hand side
of \eqref{StrThEq} will be called {\em elementary $A$-module} and
\[ E(M) :=  \bigoplus_{i=1}^n A/\pr_i^{e_i} \sim_A M \]
is the {\em elementary module attached to $M$}.

\begin{df}\label{defCharid}
Let $M$ be a finitely generated $A$-module: its {\em characteristic ideal} is
\[ Ch_A(M):=\left\{ \begin{array}{ll}
0 & \text{if } M \text{ is not torsion;} \\
{\displaystyle{\prod_{i=1}^n \pr_i^{e_i}}} & \text{if } M\sim_A \displaystyle{\bigoplus_{i=1}^n A/\pr_i^{e_i}}\,.
\end{array} \right.\]
In particular, $M$ is pseudo-null if and only if $Ch_A(M)=A$.
\end{df}

We shall denote by ${\bf Fgt}_A$ the category of finitely generated torsion $A$-modules.

\begin{rems}\label{supernatpseudoisom}
\begin{itemize} \item[{}]
\item[{\bf 1.}] An equivalent definition of pseudo-null is that all localizations at primes of height 1 are zero. If $\pr$ and
$\mathfrak q$ are two different primes of height 1 (and $M$ is a torsion $A$-module) we have $M\otimes_A A_\pr\otimes_A A_{\mathfrak q}=0$.
By the structure theorem recalled in \eqref{StrThEq} it follows immediately that for a finitely generated torsion $A$-module $M$,
the canonical map
\begin{equation} \label{eqpseudoisom} M\longrightarrow \bigoplus_\pr \big(M\otimes_A A_\pr\big) \end{equation}
(where the sum is taken over all primes of height 1) is a pseudo-isomorphism. Actually, the right-hand side of
\eqref{eqpseudoisom} can be used to compute $Ch_A(M)$: a prime $\pr$ appears in $Ch_A(M)$ with exponent the length of the module $M\otimes_A A_\pr$.
\item[{\bf 2.}] The previous remark suggests a generalization of the definition of characteristic ideal by means of {\em supernatural divisors}
\footnote{We recall that the group of divisors of $A$ is the free abelian group generated by the prime ideals of height 1 in $A$
(see \cite[VII, \S 1]{Bo}).}.
Let $M$ be any torsion $A$-module (we drop the finitely generated assumption) and define
$$Ch_A(M):=\prod_\pr\pr^{l_\pr(M\otimes_A A_\pr)}$$
where the product is taken over all primes of height 1 and the exponent of $\pr$ (i.e., the {\em length} of the module $M\otimes_A A_\pr\,$)
is a supernatural number (i.e., belongs to $\N\cup\{\infty\}\,$). More precisely, for $N$ a finitely generated torsion $A_\pr$-module let
$l_\pr(N)$ denote its length. Then we put
$$l_\pr(M\otimes_A A_\pr):=\sup\{l_\pr(M_\alpha\otimes_A A_\pr)\}\,,$$
where $M_\alpha$ varies among all finitely generated submodules of $M$. Note that, since $A_\pr$ is flat, $M_\alpha\otimes_A A_\pr$
is still a submodule of $M\otimes_A A_\pr$; furthermore, the length $l_\pr$ is an increasing function on finitely generated torsion
$A_\pr$-modules (partially ordered by inclusion).
\end{itemize}
\end{rems}

\subsubsection{Power series} \label{ssubsecpowser}
In the rest of this section, $A$ will denote a Krull domain and $B:=A[[t]]$ the ring of power series in one variable over $A$.

\begin{prop} \label{p:Rlambda}
Let $A$ be a Krull domain and $\pr\subset A$ a height 1 prime. Then $B$ is also a Krull domain and $\pr B$ is a height 1 prime of $B$.
\end{prop}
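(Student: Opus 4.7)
My plan has three steps. For the first assertion that $B = A[[t]]$ is a Krull domain, I would invoke the classical theorem that the power series ring in one variable over a Krull domain is itself a Krull domain; this is covered in \cite[Ch.~VII, \S 1]{Bo}, the reference already standing for this subsection. For the primality of $\pr B$, I would note that the natural coefficient-wise reduction $A[[t]] \twoheadrightarrow (A/\pr)[[t]]$ has kernel exactly $\pr B$ and target an integral domain (since $A/\pr$ is a domain), whence $\pr B$ is prime.

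The substantive step is to show $\pr B$ has height $1$. Since $\pr B$ is nonzero and $B$ is Krull, it contains at least one height $1$ prime $\mathfrak{q}$ of $B$; it suffices to show $\mathfrak{q} = \pr B$. The contraction $\mathfrak{q} \cap A$ is a prime of $A$ contained in $\pr$, so equals either $\pr$ itself (in which case $\pr B \subseteq \mathfrak{q}$ forces $\mathfrak{q} = \pr B$) or $(0)$. To rule out the latter, let $u$ denote the essential discrete valuation of $B$ attached to $\mathfrak{q}$; the hypothesis $\mathfrak{q} \cap A = (0)$ forces $u$ to be trivial on $A$, and hence on $K := \mathrm{Frac}(A)$. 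Its restriction to $K(t) \subseteq \mathrm{Frac}(B)$ is then a nontrivial discrete valuation of $K(t)$ trivial on $K$, and the standard classification identifies it with the $f$-adic valuation $v_f$ for some monic irreducible $f \in K[t]$ (the alternative $v_\infty$ is incompatible with $u(B) \geq 0$). Via the Krull approximation theorem applied to the finite set of height $1$ primes of $A$ appearing in the denominators and content of $f$, I would rescale $f$ by a suitable $c \in K^\times$ to obtain a representative $\tilde{f} = c f \in A[t] \subseteq B$ whose coefficients do not all lie in $\pr$. Since $u$ is trivial on $K$, one has $u(\tilde{f}) = u(f) = 1$, so $\tilde{f} \in \mathfrak{q}$; but by construction $\tilde{f} \notin \pr B$, contradicting $\mathfrak{q} \subseteq \pr B$.

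The main technical point I anticipate is precisely this rescaling. When $A$ is merely Krull (and not a UFD), content ideals of polynomials need not be principal, so the notion of ``primitive representative'' has to be interpreted divisorially, and one produces $\tilde{f} \in A[t]$ with trivial $\pr$-content via the Krull approximation theorem rather than by a direct factorization. This is standard in the Krull setting but is the only input beyond routine prime-contraction arguments.
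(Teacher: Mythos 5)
Your first two steps are fine: the Krull property of $B=A[[t]]$ is indeed classical (the paper proves it only to stay self-contained), and the dichotomy $\mathfrak{q}\cap A\in\{(0),\pr\}$ for a height~$1$ prime $\mathfrak{q}\subseteq\pr B$ is the right reduction. The hole is in the case $\mathfrak{q}\cap A=(0)$, which is exactly where all the difficulty of the proposition sits. You assert that the essential valuation $u=v_{\mathfrak{q}}$ restricts to a \emph{nontrivial} valuation of $K(t)$. Nothing guarantees this: nontriviality of $u|_{K(t)}$ is equivalent (after clearing denominators) to $\mathfrak{q}\cap A[t]\neq(0)$, i.e.\ to $\mathfrak{q}$ containing a nonzero \emph{polynomial}, whereas the elements of $\mathfrak{q}$ are a priori genuine power series. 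If $\mathfrak{q}\cap A[t]=(0)$, then $u$ is trivial on all of $K(t)$, the classification of valuations of $K(t)/K$ gives you nothing, and the contradiction evaporates; your (correct) rescaling of $f$ via the approximation theorem only ever engages the subcase where $\mathfrak{q}$ does meet $K[t]$. Controlling honest power series is precisely why the paper does not argue inside $K[t]$ at all: it localizes at $\pr$, invokes \cite[VII, \S 3.9, Proposition 8]{Bo} to see that $A_\pr[[t]]$ is a UFD in which $\pr A_\pr[[t]]=\pi A_\pr[[t]]$ is a \emph{principal} height~$1$ prime, restricts the corresponding discrete valuation $\nu_{\gotP}$ to $\mathrm{Frac}(B)$, and then contracts along a flat (localization) map using \cite[VII, \S 1.10, Proposition 15]{Bo} to conclude directly that $\pr B$ has height $\leq 1$. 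To repair your argument you would have to show that a height~$1$ prime of $B$ contained in $\pr B$ and meeting $A[t]$ trivially cannot exist, and I see no way to do that which does not essentially reproduce the paper's localization argument.

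Two smaller points. First, $u(\tilde f)=u(f)$ need not equal $1$ (the restriction of $u$ is only \emph{equivalent} to $v_f$), but positivity is all you use, so this is cosmetic. Second, your primality step identifies $\ker\bigl(A[[t]]\to(A/\pr)[[t]]\bigr)$ with $\pr B$; that kernel is $\pr[[t]]$ (series with all coefficients in $\pr$), and the equality $\pr A[[t]]=\pr[[t]]$ is automatic only when $\pr$ is finitely generated. For a general, possibly non-noetherian, Krull domain this deserves a word — though the paper's own proof makes the same silent identification when it writes $\gotP\cap B=\pr B$, and in all applications in the paper $A$ is noetherian, so this is a shared and minor blemish rather than a flaw specific to your argument.
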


\noindent This is well-known (actually, one can prove the analogue even with infinitely many variables: see \cite{gil}). In order to make the
paper as self-contained as possible, and for lack of a suitable reference for the second part of the proposition, we provide a quick proof.

\begin{proof} Let $Q$ be the fraction field of $A$. Since $A$ is Krull, we have $A=\cap A_\gotq$ as $\gotq$ varies among all prime
ideals of height 1. Furthermore, each $A_\gotq$ is a discrete valuation ring: then \cite[VII, \S 3.9, Proposition 8]{Bo} shows that
$A_\gotq[[t]]$ is a unique factorization domain. In particular every $A_\gotq[[t]][t^{-1}]$ is a Krull domain and we get
\begin{equation}  \label{IntersEq}
B=A[[t]]= Q[[t]]\cap\bigcap_\gotq A_\gotq[[t]][t^{-1}]=
Q[[t]]\cap\bigcap_\gotq \bigcap_{\gotP\in S_\gotq} \big(A_\gotq[[t]][t^{-1}]\big)_{\gotP}
\end{equation}
(where $S_\gotq$ denotes the set of height 1 primes in $A_\gotq[[t]][t^{-1}]$). This shows that $B$ is an intersection of
discrete valuation rings. A power series $\lambda=t^h\sum_{i\geq 0}c_it^i\in B$ (with $c_0\neq0$) is a unit in
$A_\gotq[[t]][t^{-1}]$ unless $c_0\in\gotq$ and, in the latter case, $\lambda$ is still a unit in
$(A_\gotq[[t]][t^{-1}])_{\gotP}$ unless it can be divided by the generator of $\gotP$. This proves that $B$ is a Krull domain.

\noindent Since $A_\pr$ is a discrete valuation ring, its maximal ideal $\pr A_\pr$ is principal: let $\pi$ be a uniformizer.
Then $\pi$ is irreducible in $A_\pr[[t]]$, hence it generates a height 1 prime ideal $\gotP:=\pi A_\pr[[t]]=\pr A_\pr[[t]]$.
By the general theory of Krull domains, $\gotP$ corresponds to a discrete valuation $\nu_\gotP$ on the fraction field
$Frac(A_\pr[[t]])$; the restriction of $\nu_\gotP$ to $Q$ is precisely the discrete valuation associated with $\pr$.
Similarly, restricting $\nu_\gotP$ to $Frac(B)$ yields a discrete valuation, with ring of integers $D_\gotP$ and maximal ideal $\gotm_\gotP$.
The ring $D_\gotP$ is the localization of $B$ at $\gotm_\gotP$: hence it is flat over $B$ and, by \cite[VII,
\S 1.10, Proposition 15]{Bo}, the prime ideal
\[ \gotm_\gotP\cap B=\gotP\cap B=\pr B\neq0 \]
has height 1.\end{proof}

\subsection{Pseudo-null $B$-modules} \label{subsecpseudonull} Now assume that $A$ (and hence $B$) is Noetherian.
In this section $P$ will be a pseudo-null $B$-module. We denote by $P_t$ the kernel of multiplication by $t$ and remark that in the exact sequence
\begin{equation}\label{exseqtP} \xymatrix{ P_t \ar@{^(->}[r] & P \ar[r]^t & P \ar@{->>}[r] & P/tP \,,}\end{equation}
$P_t$ and $P/tP$ are finitely generated $B$-modules, because so is $P$. The former ones are also finitely generated as $A$-modules,
because $t$ acts as 0 on them.
Moreover they are torsion $A$-modules (just take two relatively prime elements $f$ and $g$ in $Ann_B(P)$: their projections in
$A$ via $t\mapsto 0$ belong to both $Ann_A(P_t)$ and $Ann_A(P/tP)$ and at least one of them is nonzero since otherwise $t$ would
divide both $f$ and $g$). Therefore the characteristic ideals $Ch_A(P_t)$ and $Ch_A(P/tP)$ are given by Definition \ref{defCharid}
(there is no need for supernatural divisors here) and both of them are nonzero.

\subsubsection{Preliminaries} For $\pr$ a prime of height one in $A$, define
\[ \wh{A_\pr}:=\liminv A_\pr/\pr^n A_\pr\ . \]
By a slight abuse of notation, we shall denote by $\pr$ also the maximal ideals of $A_\pr$ and $\wh{A_\pr}$.
The natural embedding of $A$ into $\wh{A_\pr}$ allows to identify $B$ with a subring of $\wh{A_\pr}[[t]]$.

\begin{lem} \label{l:flat}
The ring $\wh{A_\pr}[[t]]$ is a flat $B$-algebra.
\end{lem}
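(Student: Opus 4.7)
The plan is to apply the local criterion of flatness for complete modules (e.g. Bourbaki, \emph{Alg\`ebre Commutative}, Chap.~III, \S 5, Th.~1, or Matsumura, Theorem~22.3) with base ring $R = B = A[[t]]$, ideal $I = (t)$, and module $M = \wh{A_\pr}[[t]]$. The criterion states: provided $B$ is noetherian, $I$ is contained in the Jacobson radical of $B$, and both $B$ and $M$ are $I$-adically complete, $M$ is $B$-flat if and only if $M/IM$ is flat over $B/I$ and $\mathrm{Tor}_1^B(B/I, M) = 0$.

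The prerequisites are quick to dispatch. $B$ is noetherian by the Hilbert basis theorem for power series; $(t) \subseteq \mathrm{rad}(B)$ because $1-tg$ has inverse $\sum_{n\geq 0}(tg)^n$ in $B$ for every $g\in B$; and both $B$ and $M$ are $(t)$-adically complete by construction.

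For the two substantive hypotheses, note first that $M/tM = \wh{A_\pr}$ and $B/(t) = A$, so the required flatness of $M/IM$ over $B/I$ amounts to $\wh{A_\pr}$ being $A$-flat. I would obtain this as the composition of two flat maps: the localization $A \to A_\pr$, and the completion $A_\pr \to \wh{A_\pr}$ of the noetherian local ring $A_\pr$ at its maximal ideal. For the Tor vanishing, I would tensor the short exact sequence $0 \to B \xrightarrow{\,t\,} B \to A \to 0$ with $M$, which identifies $\mathrm{Tor}_1^B(A, M)$ with the $t$-torsion of $M$; this vanishes since $t$ is a non-zero-divisor in the power series ring $\wh{A_\pr}[[t]]$.

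The point deserving some care is that $M$ is not finitely generated over $B$, which rules out the simplest form of the local flatness criterion; this is precisely why the $(t)$-adic completeness of both $B$ and $M$ is imposed in the version quoted above. Once that is in hand, the argument is entirely formal.
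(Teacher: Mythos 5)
Your argument is correct, but it is organized quite differently from the paper's. The paper proves flatness in one stroke by identifying $\wh{A_\pr}[[t]]$ with the $(\pr,t)$-adic completion of the localization $S_\pr^{-1}B$ (where $S_\pr=A-\pr$), and then quoting that localization and completion of a noetherian ring both yield flat algebras; you instead work along the ideal $(t)$ via the local criterion of flatness, reducing to flatness of the special fibre $\wh{A_\pr}$ over $A$ (where you again use localization followed by completion) together with $t$-torsion-freeness of $\wh{A_\pr}[[t]]$, which indeed holds since $\wh{A_\pr}$ is a domain. Both routes ultimately rest on the same two basic flatness facts; yours trades the paper's identification of $\wh{A_\pr}[[t]]$ as a completed localization of $B$ for the formal machinery of the local criterion, and your verifications of the fibrewise flatness and the Tor-vanishing are fine. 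The one place to tighten is the hypothesis you invoke to handle the non-finiteness of $M=\wh{A_\pr}[[t]]$ over $B$: the standard form of the criterion (Bourbaki, Chap.~III, \S 5, Th.~1; Matsumura, Th.~22.3) requires $M$ to be \emph{idealwise separated} for $(t)$, not merely that $B$ and $M$ be $(t)$-adically complete, and the cleanest way to secure this is the standard example rather than a completeness variant: $M$ is a finite (indeed free of rank one) module over the noetherian $B$-algebra $C:=\wh{A_\pr}[[t]]$, and $tC$ lies in the Jacobson radical of $C$, so for every finitely generated ideal $\mathfrak{a}\subset B$ the module $\mathfrak{a}\otimes_B M$ is finite over $C$ and hence $t$-adically separated by Krull's intersection theorem. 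With that substitution your proof is complete.
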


\begin{proof} Put $S_\pr:=A-\pr$. We claim that $\wh{A_\pr}[[t]]$ is the completion of $S_\pr^{-1}B$ with respect to the ideal generated by
$\pr$ and $t$. This is enough, since formation of fractions and completion of a noetherian ring both generate flat algebras,
and the composition of flat morphisms is still flat. To verify the claim consider the inclusions
\[ A_\pr[t]/(\pr, t)^n \subset S_\pr^{-1}B/(\pr, t)^n \subset \wh{A_\pr}[[t]]/(\pr, t)^n \]
and note that they are preserved by taking the inverse limit with respect to $n$. To conclude observe that
$\liminv A_\pr[t]/(\pr, t)^n=\wh{A_\pr}[[t]]$.
\end{proof}

\noindent The advantage of working over $\wh{A_\pr}[[t]]$ is that one can apply the Weierstrass Preparation Theorem (for a proof see e.g.
\cite[VII, \S3.8, Proposition 6]{Bo}): given $\alpha= \sum a_i t^i \in \wh{A_\pr}[[t]]$ such that not all coefficients are in $\pr$, there
exist $u\in \wh{A_\pr}[[t]]^*$ and a monic polynomial $\beta\in\wh{A_\pr}[t]$ such that $\alpha=u\beta$ (the degree of $\beta$ is equal
to the minimum of the indices $i$ such that $a_i\not\in \pr$).
Actually, as it is going to be clear from the proof of Lemma \ref{fingen} below, we shall need just a weaker form of this statement.

\subsubsection{Characteristic ideals} \label{subssCharId}
Now we deal with the equality between $Ch_A(P_t)$ and $Ch_A(P/tP)$.

\begin{lem}\label{EqLengths}
For any finitely generated torsion $A_\pr$-module $N$ one has the equality of lengths
\[ l_{A_\pr}(N) = l_{\wh{A_\pr}}(N\otimes_{A_\pr}\wh{A_\pr}) \ .\]
\end{lem}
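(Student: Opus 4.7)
The plan is to reduce to the structure theorem for finitely generated torsion modules over a discrete valuation ring. Since $A$ is a noetherian Krull domain and $\pr$ has height $1$, $A_\pr$ is a DVR; let $\pi$ be a uniformizer, so that $\pr A_\pr=(\pi)$. Then any finitely generated torsion $A_\pr$-module decomposes as
\[ N\simeq\bigoplus_{i=1}^r A_\pr/\pi^{n_i}A_\pr\,, \]
and by definition $l_{A_\pr}(N)=\sum_i n_i$.

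Next I would compute the right-hand side. The completion $\wh{A_\pr}$ is flat (even faithfully flat) over $A_\pr$, so tensoring the short exact sequences $0\ri A_\pr\buildrel{\pi^{n_i}}\over\ri A_\pr\ri A_\pr/\pi^{n_i}A_\pr\ri 0$ with $\wh{A_\pr}$ yields isomorphisms $A_\pr/\pi^{n_i}A_\pr\otimes_{A_\pr}\wh{A_\pr}\simeq\wh{A_\pr}/\pi^{n_i}\wh{A_\pr}$. Hence
\[ N\otimes_{A_\pr}\wh{A_\pr}\simeq\bigoplus_{i=1}^r \wh{A_\pr}/\pi^{n_i}\wh{A_\pr}\,. \]

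Now the completion $\wh{A_\pr}$ is itself a DVR whose maximal ideal is generated by the same element $\pi$ (this is standard: $\pi$ is a nonzero nonunit in the local ring $\wh{A_\pr}$ and $\pi\wh{A_\pr}=\wh{\pr A_\pr}$ is the maximal ideal). Therefore each summand $\wh{A_\pr}/\pi^{n_i}\wh{A_\pr}$ has length $n_i$ over $\wh{A_\pr}$, and summing gives $l_{\wh{A_\pr}}(N\otimes_{A_\pr}\wh{A_\pr})=\sum_i n_i=l_{A_\pr}(N)$.

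There is no real obstacle here: the only thing to check carefully is that $\pi$ continues to generate the maximal ideal of $\wh{A_\pr}$ (so that the length of $\wh{A_\pr}/\pi^{n_i}\wh{A_\pr}$ really is $n_i$), and this is immediate from the fact that $A_\pr$ and $\wh{A_\pr}$ share the same residue field and the same value group under the $\pr$-adic valuation.
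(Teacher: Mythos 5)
Your argument is correct and is essentially the paper's own proof, just with the details spelled out: both reduce to the structure theorem over the DVR $A_\pr$ and use the isomorphism $(A_\pr/\pr^{n})\otimes_{A_\pr}\wh{A_\pr}\simeq\wh{A_\pr}/\pr^{n}\wh{A_\pr}$ to match lengths term by term. Your explicit appeal to flatness and the check that $\pi$ still generates the maximal ideal of $\wh{A_\pr}$ are exactly the points the paper leaves implicit.
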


\begin{proof}
Since both $A_\pr$ and $\wh{A_\pr}$ are discrete valuation rings and
\[ A_\pr/\pr^n\simeq\wh{A_\pr}/\pr^n\simeq(A_\pr/\pr^n)\otimes\wh{A_\pr}\ , \]
the statement follows directly from the structure theorem for finitely generated torsion modules over principal ideal domains.
\end{proof}

\begin{lem}\label{fingen} Let $P$ be a pseudo-null $B$-module. Then $P\otimes_B\wh{A_\pr}[[t]]$ is a finitely generated
$\wh{A_\pr}$-module for any height 1 prime ideal $\pr\subset A$.
\end{lem}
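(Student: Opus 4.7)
The plan is to produce, inside the annihilator of $P\otimes_B\wh{A_\pr}[[t]]$, a monic polynomial $\beta\in\wh{A_\pr}[t]$. Once this is done, the quotient $\wh{A_\pr}[[t]]/(\beta)$ is a free $\wh{A_\pr}$-module of rank $\deg\beta$ (because $\beta$ is monic, division with remainder holds), so any $\wh{A_\pr}[[t]]$-module killed by $\beta$ and finitely generated over $\wh{A_\pr}[[t]]$ is automatically finitely generated over $\wh{A_\pr}$. Since $P\otimes_B\wh{A_\pr}[[t]]$ is the base change of the finitely generated $B$-module $P$, it is finitely generated over $\wh{A_\pr}[[t]]$, and thus the claim will follow.

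To produce $\beta$, I would first observe that pseudo-nullness of $P$ means $\mathrm{Ann}_B(P)$ has height $\geq 2$, while the ideal $\pr B\subset B$ is prime of height $1$ by Proposition \ref{p:Rlambda}. Consequently $\mathrm{Ann}_B(P)\not\subseteq\pr B$, so I can pick some $h\in\mathrm{Ann}_B(P)$ with $h\notin\pr B$. The key elementary fact I would then verify (this is the step needing care) is that, since $A$ is noetherian, $\pr B$ coincides with the set of power series $\sum a_it^i$ all of whose coefficients lie in $\pr$: the nontrivial inclusion follows by writing each $a_i$ in terms of a finite set of generators of $\pr$ and collecting the resulting power series. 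Hence $h$ has at least one coefficient $a_j\in A$ which does not lie in $\pr$, and therefore also does not lie in the maximal ideal $\pr\wh{A_\pr}$ when $h$ is viewed in $\wh{A_\pr}[[t]]$.

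Now Weierstrass Preparation (as recalled in \S\ref{ssubsecpowser}) applies to $h\in\wh{A_\pr}[[t]]$ and yields a factorization $h=u\beta$ with $u\in\wh{A_\pr}[[t]]^*$ and $\beta\in\wh{A_\pr}[t]$ monic. Since $h$ annihilates $P\otimes_B\wh{A_\pr}[[t]]$ and $u$ is a unit, so does $\beta$, and the argument of the first paragraph concludes the proof. The only real obstacle is the passage from $h\notin\pr B$ to the existence of a coefficient outside $\pr$; this is where noetherianity of $A$ enters and why one cannot dispense with it in this subsection.
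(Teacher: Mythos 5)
Your proof is correct and follows essentially the same route as the paper: both arguments hinge on the observation that pseudo-nullity forces $\mathrm{Ann}_B(P)\not\subseteq\pr B$, hence an annihilator with some coefficient outside $\pr$, and then invoke Weierstrass theory over $\wh{A_\pr}[[t]]$ (the paper uses only the division/decomposition step $\wh{A_\pr}[[t]]=\alpha\wh{A_\pr}[[t]]\oplus\bigoplus_{i<n}\wh{A_\pr}t^i$, while you use the full preparation theorem and then divide by the resulting monic polynomial -- two forms of the same fact). Your explicit verification that $\pr B=\pr[[t]]$ via finite generation of $\pr$ is a point the paper leaves implicit, and is a welcome addition; just note that "monic" should really be "distinguished" for the quotient $\wh{A_\pr}[[t]]/(\beta)$ to be free of rank exactly $\deg\beta$, which is what Weierstrass preparation actually delivers.
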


\begin{proof}
We consider $P$ as an $A$-module and work separately with primes $\pr$ belonging or not belonging to the support of $P$. If the prime $\pr$ is
not in this support, there is some $r\in Ann_A (P)$ which becomes a unit in $A_\pr\subset \wh{A_\pr}[[t]]$, hence
$P\otimes_B\wh{A_\pr}[[t]]=0$ and the statement is trivial.
Thus, from now on, we assume $\pr \in Supp_A (P)$ (i.e., $Ann_A (P) \subset \pr$). Since $\pr B$ is a height 1 prime ideal
in $B$, the hypothesis on $P$ yields $Ann_B (P)\not\subset \pr B$.
Hence there exists $\alpha \in Ann_B (P) - \pr B$, i.e.,
\[ \alpha = \sum_{i\ge 0} a_i t^i\in Ann_B (P)\qquad ({\rm with\ } a_i\in A\ {\rm for\ any\ }i) \]
with at least one $a_i \not\in \pr$. For such an $\alpha$, let $n$ be the smallest index such that $a_n\notin\pr$.
Then, by \cite[VII, \S 3.8, Proposition 5]{Bo} (which is a key step in the proof of the Weierstrass Preparation Theorem), one has a decomposition
\[ \wh{A_\pr}[[t]]=\alpha \wh{A_\pr}[[t]]\oplus\big( \bigoplus_{i=0}^{n-1} \wh{A_\pr} t^i\big)\ .\]
Now one just uses $P\otimes_B\alpha\wh{A_\pr}[[t]]=\alpha\cdot(P\otimes_B\wh{A_\pr}[[t]])=0$.
\end{proof}

\begin{prop} \label{pseudonull}
Let $P$ be a pseudo-null $B$-module. Then $Ch_A(P_t)=Ch_A(P/tP)$.
\end{prop}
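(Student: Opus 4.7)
Plan: I reduce $Ch_A(P_t)=Ch_A(P/tP)$ to an equality of local lengths. Since $P_t$ and $P/tP$ are finitely generated torsion $A$-modules (as recalled in Section \ref{subsecpseudonull}), the local formula for characteristic ideals in Remarks \ref{supernatpseudoisom} shows it is enough to check
\[l_{A_\pr}(P_t\otimes_A A_\pr)=l_{A_\pr}\big((P/tP)\otimes_A A_\pr\big)\]
for each height 1 prime $\pr$ of $A$, and by Lemma \ref{EqLengths} the same equality can be tested after completion at $\pr$.

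To move the problem into $R:=\wh{A_\pr}[[t]]$, I set $\wh{P}:=P\otimes_B R$. By Lemma \ref{l:flat} the map $B\to R$ is flat, so tensoring \eqref{exseqtP} with $R$ preserves exactness; because $P_t$ and $P/tP$ are killed by $t$, the base-changed modules simplify to $P_t\otimes_A\wh{A_\pr}$ and $(P/tP)\otimes_A\wh{A_\pr}$, which I identify with $\wh{P}_t$ and $\wh{P}/t\wh{P}$. Lemma \ref{fingen} further guarantees that $\wh{P}$ is finitely generated over $\wh{A_\pr}$, so the remaining task is to prove
\[l_{\wh{A_\pr}}(\wh{P}_t)=l_{\wh{A_\pr}}(\wh{P}/t\wh{P}).\]

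The crux, and main obstacle, is to upgrade \emph{finitely generated} to \emph{finite length} over $\wh{A_\pr}$, i.e.\ to show that $\wh{P}$ is $\wh{A_\pr}$-torsion. Once this is established, additivity of length on
\[0\to\wh{P}_t\to\wh{P}\xrightarrow{t}\wh{P}\to\wh{P}/t\wh{P}\to 0\]
immediately yields the desired equality and closes the proof. I would deduce the torsion statement from pseudo-nullness of $P$ combined with the structure of $R$ as a $2$-dimensional regular local ring with maximal ideal $(\pr,t)$: any prime $\gotP$ of $R$ not containing $\pr$ is non-maximal, hence of height at most $1$. If $\wh{P}$ failed to be torsion, finite generation would force some such $\gotP$ to contain $Ann_B(P)\cdot R$; then $\gotP\cap B\supseteq Ann_B(P)$ would have height $\geq 2$ by pseudo-nullness, whereas flatness of $B\to R$ together with the going-down theorem would force the height of $\gotP\cap B$ to be at most the height of $\gotP$, which is at most $1$, a contradiction.
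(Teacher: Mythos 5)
Your proof is correct and follows the same route as the paper's: reduce to an equality of local lengths via Remark \ref{supernatpseudoisom} and Lemma \ref{EqLengths}, base-change the sequence \eqref{exseqtP} along the flat map $B\to\wh{A_\pr}[[t]]$ of Lemma \ref{l:flat}, and invoke Lemma \ref{fingen}. The one substantive difference is that you isolate and actually prove that $\wh{P}:=P\otimes_B\wh{A_\pr}[[t]]$ is $\wh{A_\pr}$-torsion (hence of finite length), whereas the paper passes directly from ``all terms of \eqref{exseqtP2} are finitely generated over $\wh{A_\pr}$'' to ``the first and last terms have the same length''. You are right to flag this as the crux: for an endomorphism of a merely finitely generated module over a discrete valuation ring, the kernel and cokernel can both be finite yet of different lengths (multiplication by a uniformizer on $\wh{A_\pr}$ itself has trivial kernel and cokernel of length $1$), so finite generation alone does not suffice and the finite-length property of the middle term is genuinely needed; it comes from the pseudo-nullity of $P$, not from the statement of Lemma \ref{fingen} alone. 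Your argument for it is sound: if $\wh{P}$ had a nonzero free part, its support in the two-dimensional regular local ring $\wh{A_\pr}[[t]]$ would contain a prime $\gotP$ of height at most $1$ with $\gotP\supseteq Ann_B(P)\cdot\wh{A_\pr}[[t]]$, so $\gotP\cap B$ would contain $Ann_B(P)$ and have height at least $2$, contradicting the height inequality furnished by going-down for the flat map $B\to\wh{A_\pr}[[t]]$. Once $\wh{P}$ has finite length, additivity of length along the four-term sequence gives the conclusion exactly as in the paper. In short, your write-up is, if anything, more complete than the published proof on this one point.
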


\begin{proof}
By Remark \ref{supernatpseudoisom} and Lemma \ref{EqLengths}, we need to show that
\[ l_{\wh{A_\pr}}(P_t\otimes_A\wh{A_\pr}) = l_{\wh{A_\pr}} ((P/tP)\otimes_A\wh{A_\pr}) \]
for any height 1 prime ideal $\pr$ of $A$. By Lemma \ref{l:flat}, the functor $\otimes_B\wh{A_\pr}[[t]]$ is exact.
Applying it to \eqref{exseqtP}, we get an exact sequence
\begin{equation} \label{exseqtP2}
P_t\otimes_B\wh{A_\pr}[[t]] \iri P\otimes_B\wh{A_\pr}[[t]] {\buildrel t\over\longrightarrow}
P\otimes_B\wh{A_\pr}[[t]] \sri (P/tP)\otimes_B\wh{A_\pr}[[t]]\ . \end{equation}
Lemma \ref{fingen} shows that all terms of \eqref{exseqtP2} are finitely generated $\wh{A_\pr}$-modules.
Hence, the first and last term of the sequence have the same length. Finally, just observe that if $N$ is a $B$-module annihilated by $t$ then
\[ N\otimes_B\wh{A_\pr}[[t]] = N\otimes_A\wh{A_\pr}\ .\]
\end{proof}

\begin{exem}
If $P$ happens to be finitely generated over $A$ then the statement of the proposition is obvious.
We give a few examples of pseudo-null $B:=\Z_p[[s,t]]$-modules which are not finitely generated as $A:=\Z_p[[s]]$-modules,
providing non-trivial examples in which the above theorem applies.
However we remark that the main consequence of Lemma \ref{fingen} is exactly the fact that we can ignore the issue of checking
whether a pseudo-null $B$-module is finitely generated over $A$ or not.
\begin{itemize}
\item[{\bf 1.}] $P=B/(p,s)\,$. Then $P\simeq \F_p[[t]]$ is not finitely generated over $A$. In this case $P_{t}=0$ and
$P/tP\simeq \F_p$ (both $A$-pseudo-null), so that
\[ Ch_A(P_{t}) = Ch_A(P/tP) = A\ .\]
\item[{\bf 2.}] $P=B/(s,pt)\,$. Then $P\simeq \Z_p[[t]]/(pt)$ is not finitely generated over $A$ and elements in $P$ can be written as
\[ m = \sum_{i\ge 0} a_i t^i \quad a_0\in \Z_p\ {\rm and}\ a_i\in \{0,...,p-1\}\ \forall\,i\ge 1\ .\]
Moreover
\[ P_{t} = p \Z_p[[t]]/(pt) \simeq p\Z_p \simeq \Z_p \simeq A/(s) \]
and
\[ P/tP = \Z_p[[s,t]]/(t,s,pt) \simeq \Z_p \simeq A/(s) \ ,\]
so both have characteristic ideal $(s)$ (as $A$-modules).
\item[{\bf 3.}] With $P=B/(p,st)$, a similar reasoning shows that $Ch_A(P/tP)=Ch_A(P_t)=(p)$.
\end{itemize}
\end{exem}

\begin{rem}\label{failsintorsion} The hypothesis that $P$ is pseudo-null is necessary: if $M$ is a torsion $B$-module then
it is not true, in general, that $Ch_A(M_t)=Ch_A(M/tM)$. We give an easy example: let again $B=\Z_p[[s,t]]$ with $A=\Z_p[[s]]$, and
consider $M=\Z_p[[s,t]]/(p^2+s+t)$, which is a torsion $B$-module. Observe that $M_t$ is trivial (so $Ch_A(M_t)=A$) and
\[ M/tM=\Z_p[[s,t]]/(t,p^2+s)\simeq A/(p^2+s) \]
has characteristic ideal over $A$ equal to $(p^2+s)$.
Moreover $Ch_A(M/tM)=(p^2+s)$ is the image of $Ch_B(M)=(p^2+s+t)$ under the projection $\pi\colon B\rightarrow A$, $t\mapsto 0$.
Hence, in this case,
\[ Ch_A(M_t)\, \pi \left(Ch_B(M)\right) = Ch_A(M/tM) \]
which anticipates the general formula of Proposition \ref{CharId1}.
\end{rem}

As mentioned in the Introduction, the following proposition will be crucial in the study of characteristic ideals for
Iwasawa modules under descent.

\begin{prop}\label{CharId1}
Let $\pi\colon B\rightarrow A$ be the projection given by $t\mapsto 0$ and let $M$ be a finitely generated torsion $B$-module. Then
\begin{equation} \label{descentEq} Ch_A(M_t)\, \pi\left(Ch_B(M)\right) = Ch_A(M/tM) \ .\end{equation}
Moreover,
\[ Ch_A(M_t)=0\Longleftrightarrow \pi\left(Ch_B(M)\right) =0\Longleftrightarrow Ch_A(M/tM)= 0 \]
and in this case $M_t$ and $M/tM$ are $A$-modules of the same rank.
\end{prop}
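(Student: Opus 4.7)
The plan is to reduce, via the structure theorem and Proposition \ref{pseudonull}, to the case of an elementary module $M=B/\gotq^e$ for a single height 1 prime $\gotq$ of $B$, and then to verify the formula by a case analysis on the position of $\gotq$ relative to $t$.

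For the reduction step, I fix a pseudo-isomorphism $\phi\colon M\to E(M)$ with pseudo-null kernel $P$ and pseudo-null cokernel $Q$, split it into $0\ri P\ri M\ri\mathrm{Im}(\phi)\ri0$ and $0\ri\mathrm{Im}(\phi)\ri E(M)\ri Q\ri0$, and apply the snake lemma to multiplication by $t$ on each sequence. The resulting six-term exact sequences of finitely generated $A$-modules, combined with the multiplicativity of $Ch_A$ and with the identities $Ch_A(P_t)=Ch_A(P/tP)$ and $Ch_A(Q_t)=Ch_A(Q/tQ)$ provided by Proposition \ref{pseudonull}, show that the formula \eqref{descentEq} for $M$ is equivalent to the same formula for $E(M)$ (note that $Ch_B(M)=Ch_B(E(M))$). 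Since $(\cdot)_t$ and $(\cdot)/t(\cdot)$ commute with direct sums while $Ch_A$ and $Ch_B$ are multiplicative, one further reduces to a single summand $M=B/\gotq^e$.

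For the case analysis: if $\gotq=tB$ then $M=B/t^eB$ has $M_t\simeq A\simeq M/tM$ both free of rank $1$, so the three quantities $Ch_A(M_t)$, $Ch_A(M/tM)$, and $\pi(Ch_B(M))=\pi(t^eB)$ all vanish and the ranks agree. If $\gotq\cap A=\pr\neq 0$, then Proposition \ref{p:Rlambda} forces $\gotq=\pr B$ with $\pr$ of height 1 in $A$, so $M\simeq(A/\pr^e)[[t]]$ yields $M_t=0$, $M/tM\simeq A/\pr^e$, $\pi(Ch_B(M))=\pr^e$, and the formula is immediate. The remaining case $\gotq\cap A=0$ with $t\notin\gotq$ is treated by first replacing $\gotq^e$ with its symbolic power $\gotq^{(e)}$, the quotient being pseudo-null (hence absorbed by the reduction above) and having $\gotq$ as its only associated prime; then $t$ becomes $M$-regular so $M_t=0$, while $M/tM\simeq A/\pi(\gotq^{(e)})$ is a nonzero torsion $A$-module, and the identity $Ch_A(A/\pi(\gotq^{(e)}))=\pi(\gotq)^e$ is checked by localizing at each height 1 prime $\pr\subset A$, passing to $\wh{A_\pr}$ via Lemma \ref{EqLengths}, and carrying out the length computation in $\wh{A_\pr}[[t]]$ by Weierstrass preparation along the lines of Lemma \ref{fingen}.

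The main obstacle is precisely this last ``diagonal'' case $\gotq\cap A=0$, which carries the genuine arithmetic content of the proposition and matches the behaviour observed in Remark \ref{failsintorsion}; the other two cases are essentially formal. The concluding claim on simultaneous vanishing and equality of $A$-ranks follows directly from the case analysis: one has $\pi(Ch_B(M))=0$ if and only if $E(M)$ contains a summand of type $B/t^eB$, and each such summand contributes exactly $1$ to the $A$-rank of both $M_t$ and $M/tM$, whereas summands of the other two types are $A$-torsion and leave the rank unchanged.
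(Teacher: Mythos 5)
Your argument is correct and follows essentially the same route as the paper: reduce to the elementary module $E(M)$ via the structure theorem, the snake lemma for multiplication by $t$, and Proposition \ref{pseudonull} to discard the pseudo-null error terms, then compute summand by summand according to whether $\gotq=tB$, $\gotq=\pr B$, or $\gotq$ is ``diagonal''. The only differences are cosmetic: the paper uses the single sequence $E(M)\hookrightarrow M\twoheadrightarrow P$ rather than your two short exact sequences, and it compresses the diagonal case into the one-line assertion $Ch_A(E(M)/tE(M))=\pi(Ch_B(E(M)))$, whereas you (rightly) flag that step as the real content and sketch its verification via $\wh{A_\pr}[[t]]$ and Weierstrass preparation.
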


\begin{proof} Recall that the structure theorem \eqref{StrThEq} provides a pseudo-iso\-mor\-phism between $M$ and its associated
elementary module $E(M)$. As noted above, being pseudo-isomorphic is an equivalence relation for torsion modules: therefore
one has a (non-canonical) sequence
\[ \xymatrix{ E(M)\, \ar@{^(->}[r]  & M \ar@{->>}[r]  & P  } \]
where $P$ is pseudo-null over $B$ and the injectivity on the left comes from the fact that elementary modules have no nontrivial
pseudo-null submodules (just use the valuation on $B_\pr$ to check that the annihilator of any $x\in B/\pr^e-\{0\}$ must be
contained in $\pr$). The snake lemma sequence coming from the diagram
\[ \xymatrix{ E(M) \ar@{^(->}[r] \ar[d]^{t} & M \ar@{->>}[r] \ar[d]^{t} & P \ar[d]^{t} \\
E(M) \ar@{^(->}[r] & M \ar@{->>}[r]  & P } \]
reads as
\begin{equation}\label{SnLemEq}
E(M)_t \iri M_t \longrightarrow P_t \longrightarrow E(M)/tE(M) \longrightarrow M/tM \sri P/tP \ . \end{equation}
As we remarked at the beginning of Section \ref{subsecpseudonull}, both $P_t$ and $P/tP$ are finitely generated torsion $A$-modules.
It is also easy to see that all modules in the sequence \eqref{SnLemEq} are finitely generated over $A$.
Now observe that $(B /\pr^e)_t$ is zero if $\pr\neq(t)$ and isomorphic to $A$ if $\pr=(t)$;
similarly, $(B/\pr^e)/t(B/\pr^e)$ is either pseudo-null or isomorphic to $A$. Thus, putting $E(M)= \oplus B /\pr_i^{e_i}$,
we find $E(M)_t\simeq A^r$ and
\[ E(M)/tE(M)= \oplus B /(\pr_i^{e_i},t) \simeq \oplus A/(\pi(\pr_i)^{e_i}) \simeq A^r\oplus \bullet \ , \]
where $r:=\#\{i\mid\pr_i=t B\}$ and $\bullet$ is a pseudo-null $B$-module.
Moreover \eqref{SnLemEq} shows that $E(M)/tE(M)$ is $A$-torsion if and only if $M/tM$ is $A$-torsion and $E(M)_t$ is $A$-torsion
if and only if $M_t$ is $A$-torsion.
Therefore we have two cases:\begin{itemize}
\item[{\bf 1.}] if $r>0$, then $(t)$ divides $Ch_B(M)$, so $\pi(Ch_B(M))=0$ and, since $M_t$ and $M/tM$ are not
$A$-torsion, $Ch_A(M_t)=Ch_A(M/tM)=0$ as well (the statement on $A$-ranks is immediate from \eqref{SnLemEq}: e.g., apply
the exact functor $\otimes_A Frac(A)$);
\item[{\bf 2.}] if $r=0$, then, because of the equivalent conditions above, all the characteristic ideals involved in
\eqref{descentEq} are nonzero; moreover we have
\[ Ch_A(E(M)/tE(M)) = \pi(Ch_B(E(M))) = \pi(Ch_B(M)) \]
and \eqref{descentEq} follows from the sequence \eqref{SnLemEq}, Proposition \ref{pseudonull} and the multiplicativity
of characteristic ideals.
\end{itemize}
\end{proof}

\begin{cor}\label{EqVerCharId}
In the above setting assume that $M/tM$ is a finitely generated torsion $A$-module. Then
$M$ is a pseudo-null $B$-module if and only if $Ch_A(M_t)= Ch_A(M/tM)\,$. Moreover if $M/tM \sim_A 0$, then $M \sim_B 0$.
\end{cor}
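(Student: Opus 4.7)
The plan is to reduce everything to the identity of Proposition~\ref{CharId1}, $Ch_A(M_t)\cdot\pi(Ch_B(M))=Ch_A(M/tM)$, together with the elementary observation that every element of $1+(t)\subset B$ is a unit in $B=A[[t]]$. The forward direction of the equivalence is immediate from Proposition~\ref{pseudonull}: $M\sim_B 0$ yields $Ch_A(M_t)=Ch_A(M/tM)$ directly.

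For the converse, the hypothesis that $M/tM$ is torsion over $A$ ensures $Ch_A(M/tM)\neq 0$; combined with $Ch_A(M_t)=Ch_A(M/tM)$ the identity rewrites as
\[ Ch_A(M_t)\cdot \pi(Ch_B(M)) = Ch_A(M_t), \]
with $Ch_A(M_t)$ a nonzero, finitely generated ideal of the noetherian domain $A$. Cancellation --- either by Nakayama's lemma applied to $Ch_A(M_t)$, or equivalently by cancellation in the free abelian divisor group of $A$ --- forces $\pi(Ch_B(M))=A$, which is the same as $Ch_B(M)+(t)=B$. Writing $1=c+td$ with $c\in Ch_B(M)$ and $d\in B$, the element $c=1-td$ has unit constant term, hence lies in $B^*$, and this forces $Ch_B(M)=B$, i.e.\ $M\sim_B 0$ by Definition~\ref{defCharid}.

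The ``moreover'' clause is now formal: if $M/tM\sim_A 0$ then $Ch_A(M/tM)=A$, so Proposition~\ref{CharId1} yields $Ch_A(M_t)\cdot\pi(Ch_B(M))=A$; since a product of ideals in the domain $A$ equals $A$ only when both factors do, we get $Ch_A(M_t)=A=Ch_A(M/tM)$ and the equivalence just proved delivers $M\sim_B 0$. The only step requiring any thought is the passage from $\pi(Ch_B(M))=A$ to $Ch_B(M)=B$, which is nothing other than lifting a unit from $A$ across $\ker\pi=(t)$.
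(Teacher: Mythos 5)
Your proof is correct and follows the paper's own argument essentially verbatim: forward direction from Proposition~\ref{pseudonull}, converse by feeding the equality into \eqref{descentEq} to get $\pi(Ch_B(M))=A$ and then lifting the resulting element with constant term $1$ to a unit of $B=A[[t]]$, and the ``moreover'' clause by noting that a product of ideals of the domain $A$ equals $A$ only if each factor does. The only difference is that you make explicit (via Nakayama/cancellation) the step $Ch_A(M_t)\cdot\pi(Ch_B(M))=Ch_A(M_t)\Rightarrow\pi(Ch_B(M))=A$, which the paper leaves implicit.
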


\begin{proof} The ``only if'' part is provided by Proposition \ref{pseudonull}. For the ``if'' part we assume the equality of
characteristic ideals (which are nonzero by hypothesis). By \eqref{descentEq} we have $\pi(Ch_B(M))=A$, hence there is
some $f\in Ch_B(M)$ such that $\pi(f)=1$. But then $f=\sum_{i\ge 0} c_i t^i$
with $c_0=1$, which is an obvious unit in $B=A[[t]]$. Therefore $Ch_B(M)=B\,$, i.e., $M$ is pseudo-null over $B$.
For the last statement just note that $Ch_A(M/tM)=A$ yields $Ch_A(M_t)\, \pi(Ch_B(M))=A$, so $Ch_A(M_t)=A$ as well.
\end{proof}

\begin{rems}\label{RemPR}
\begin{itemize}  \item[{}]
\item[{\bf 1.}] When $R\simeq \Z_p[[t_1,\ldots,t_d]]$ (i.e., the Iwasawa algebra for a $\Z_p^d$-ex\-ten\-sion of global fields),
the statement of the previous corollary appears in \cite[Lemme 4]{PR}. Note
anyway that the proof given there relies on the choice of a $\Z_p^{d-1}$-subextension (i.e., on the strategy
mentioned in Remark \ref{IntroRem}).
\item[{\bf 2.}] The possibility of lifting pseudo-nullity from $M/tM$ to $M$ has been used to prove some instances
of Greenberg's Generalized Conjecture
(for statement and examples see, e.g., \cite{Ba}, \cite{Ba2} and \cite{Oz}).
\end{itemize}
\end{rems}

\subsection{Pro-characteristic ideals}\label{SecProCharId}
We can now define an analogue of characteristic ideals for finitely generated modules over certain non-noetherian Krull
domains $\L$. We need $\L$ to be the inverse limit of noetherian Krull domains and we limit ourselves to finitely generated
modules because characteristic ideals are usually defined only for them.

\noindent Let $\{\L_d\}_{d\geq 0}$ be an inverse system of noetherian Krull domains such that
\[ \L_d \simeq \L_{d+1}/\pr_{d+1}\ \  {\rm and}\ \  \L_{d+1} \simeq \il{n} \L_{d+1}/\pr_{d+1}^n\ \
{\rm for\ any\ }d\geq 0 \]
($\pr_{d+1}$ a principal prime ideal of $\L_{d+1}$ of height 1).
Let $\L:=\displaystyle{\il{d} \L_d}$ and note that, by hypothesis, $\L_{d+1}\simeq \L_d[[t_{d+1}]]$, where the
variable $t_{d+1}$ corresponds to a generator of the ideal $\pr_{d+1}\,$.
Take a finitely generated $\L$-module $M$ which can be written as the inverse limit of $\L_d$-modules
$M = \displaystyle{\il{d} M_d}$ (all the relevant arithmetic applications to Iwasawa modules satisfy
this requirement).

\begin{thm}\label{MainThm}
With the above notations if, for any $d \gg 1$, \begin{itemize}
\item[{\bf 1.}] $(M_d)_{t_d}$ (the $\pr_d$-torsion submodule of $M_d\,$) is a pseudo-null $\L_{d-1}$-module;
\item[{\bf 2.}] $Ch_{\L_{d-1}}(M_d/t_dM_d)\subseteq Ch_{\L_{d-1}}(M_{d-1})$,
\end{itemize}
then the {\em pro-characteristic ideal} of $M$ over $\L$ is well defined as
\[ \wt{Ch}_\L(M):= \il{d}\, (\pi^\L_{\L_d})^{-1}(Ch_{\L_d}(M_d)) \]
(where $\pi^\L_{\L_d}\,: \L\rightarrow \L_d$ is the natural map provided by the inverse limit defining $\L$).
\end{thm}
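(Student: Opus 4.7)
The plan is to verify that the inverse limit in the definition of $\wt{Ch}_\L(M)$ makes sense, i.e., that the ideals $I_d := (\pi^\L_{\L_d})^{-1}(Ch_{\L_d}(M_d))$ form a descending chain in $\L$ for $d\gg 1$. Since $\pi^\L_{\L_{d-1}}=\pi^{\L_d}_{\L_{d-1}}\circ \pi^\L_{\L_d}$, the inclusion $I_d \subseteq I_{d-1}$ is equivalent to
\[ \pi^{\L_d}_{\L_{d-1}}\!\left(Ch_{\L_d}(M_d)\right)\subseteq Ch_{\L_{d-1}}(M_{d-1}). \]
Once this chain property is established, the intersection $\bigcap_d I_d$ is automatically an ideal of $\L$, which is precisely what the inverse limit computes.

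The key input is Proposition \ref{CharId1}, applied with $A=\L_{d-1}$, $B=\L_d\simeq \L_{d-1}[[t_d]]$ (via the isomorphism given by the inverse system hypothesis), $t=t_d$, and $M=M_d$ regarded as a $\L_d$-module. Note $\L_{d-1}$ is noetherian Krull by assumption, so Proposition \ref{CharId1} applies and yields
\[ Ch_{\L_{d-1}}\!\big((M_d)_{t_d}\big)\cdot \pi^{\L_d}_{\L_{d-1}}\!\left(Ch_{\L_d}(M_d)\right)=Ch_{\L_{d-1}}(M_d/t_d M_d). \]
Invoking hypothesis \textbf{1}, the first factor on the left is $(1)$, giving the identity
\[ \pi^{\L_d}_{\L_{d-1}}\!\left(Ch_{\L_d}(M_d)\right)=Ch_{\L_{d-1}}(M_d/t_d M_d). \]
Hypothesis \textbf{2} then yields $\pi^{\L_d}_{\L_{d-1}}(Ch_{\L_d}(M_d))\subseteq Ch_{\L_{d-1}}(M_{d-1})$, which is the required descending chain condition, and therefore the pro-characteristic ideal is well defined.

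I would expect the only non-obvious point to be the preliminary check that the setup of Proposition \ref{CharId1} is legitimate: we need $M_d$ to be a finitely generated torsion $\L_d$-module so that $Ch_{\L_d}(M_d)$ is defined in the ordinary sense, and we need $M_d$ to be $B$-torsion for the descent formula to apply (finite generation of $M$ over $\L$ descends to $M_d$ through the surjection $M\twoheadrightarrow M_d$, while torsion is implicit in hypothesis \textbf{2} since $Ch_{\L_{d-1}}(M_{d-1})$ is invoked; if any $M_d$ failed to be torsion, the characteristic ideals would be zero and the inclusion chain would hold trivially). With these verifications in place, the proof reduces to a one-line application of Proposition \ref{CharId1}, which is exactly why that proposition was positioned as the technical core of the section.
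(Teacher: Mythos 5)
Your proof is correct and follows essentially the same route as the paper's: both reduce to applying Proposition \ref{CharId1} with $A=\L_{d-1}$, $B=\L_d$, so that hypothesis \textbf{1} kills the factor $Ch_{\L_{d-1}}((M_d)_{t_d})$ and hypothesis \textbf{2} gives $\pi^{\L_d}_{\L_{d-1}}(Ch_{\L_d}(M_d))\subseteq Ch_{\L_{d-1}}(M_{d-1})$, which is exactly the compatibility needed for the limit to make sense. Your preliminary reductions (discarding the non-torsion case because all characteristic ideals vanish, and checking that $M_d$ is finitely generated torsion) also match the paper's.
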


\begin{proof} We can assume that the $M_d$ are torsion $\L_d$-modules (at least for $d\gg 0$), otherwise
the $Ch_{\L_d}(M_d)$ are zero and there is nothing to prove.
By Proposition \ref{CharId1}, applied to $A=\L_{d-1}\,$, $B=\L_d$ and $M=M_d\,$, we get
\[ Ch_{\L_{d-1}}((M_d)_{t_d})\pi^{\L_d}_{\L_{d-1}}(Ch_{\L_d}(M_d))=Ch_{\L_{d-1}}(M_d/t_d M_d) \ .\]
For $d\gg 1$ the hypotheses yield
\[ \pi^{\L_d}_{\L_{d-1}}(Ch_{\L_d}(M_d))\subseteq Ch_{\L_{d-1}}(M_{d-1}) \ ,\]
which shows that the generators of the ideals $Ch_{\L_d}(M_d)$ form a coherent sequence with respect to the maps defining $\L$.
Hence this sequence defines an element in $\L$ which can be considered as a generator for the pro-characteristic ideal
\[ \wt{Ch}_\L(M):= \il{d}\, (\pi^\L_{\L_d})^{-1}(Ch_{\L_d}(M_d)) \ .\]
\end{proof}

Our pro-characteristic ideal maintains two classical properties of characteristic ideals.

\begin{cor}\label{CorMainThm}
Let $M$, $M'$ and $M''$ be finitely generated $\L$-modules which verify the hypotheses of Theorem \ref{MainThm}.\begin{itemize}
\item[{\bf 1.}] The pro-characteristic ideals are multiplicative, i.e., if there is an exact sequence
\begin{equation}\label{MTEq1}
\xymatrix{ M' \ar@{^(->}[r] & M \ar@{->>}[r] & M'' } \ ,
\end{equation}
then
\[ \wt{Ch}_\L(M)=\wt{Ch}_\L(M')\wt{Ch}_\L(M'')\ .\]
\item[{\bf 2.}] $\wt{Ch}_\L(M)\neq 0$ if and only if $M_d$ is a finitely generated torsion $\L_d$-module
for $d\gg 0$.
\end{itemize}
\end{cor}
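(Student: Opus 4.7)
\emph{Proof plan.} The plan is to reduce both statements to the level of the noetherian Krull domains $\L_d$, where characteristic ideals enjoy classical multiplicativity and torsion detection, and then to reassemble the results via the coherent-sequence construction used in the proof of Theorem \ref{MainThm}.

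For Part 1, I would start from the short exact sequence $M'\iri M\sri M''$ of $\L$-modules and, assuming it induces a compatible family of short exact sequences $0\to M'_d\to M_d\to M''_d\to 0$ of $\L_d$-modules at each finite level, apply the classical multiplicativity of characteristic ideals over the noetherian Krull domain $\L_d$ (an immediate consequence of the structure theorem \eqref{StrThEq}):
$$Ch_{\L_d}(M_d)=Ch_{\L_d}(M'_d)\cdot Ch_{\L_d}(M''_d).$$
Choosing compatible generators $f_d=f'_d\cdot f''_d$ (adjusting by units when needed) and passing to the inverse limit as in Theorem \ref{MainThm}, the resulting coherent elements $\tilde f=\tilde f'\cdot\tilde f''$ of $\L$ generate the respective pro-characteristic ideals, proving $\wt{Ch}_\L(M)=\wt{Ch}_\L(M')\cdot\wt{Ch}_\L(M'')$.

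For Part 2, the $(\Leftarrow)$ direction is essentially the content of the coherent-sequence construction in the proof of Theorem \ref{MainThm}: if $M_d$ is finitely generated torsion for $d\gg 0$, then each $Ch_{\L_d}(M_d)$ is a nonzero principal ideal, the coherent generators assemble into a nonzero element of $\L=\il{d}\L_d$, and this element generates $\wt{Ch}_\L(M)$. For the $(\Rightarrow)$ direction I would argue by contrapositive. Suppose $M_d$ is not a finitely generated torsion $\L_d$-module for infinitely many indices in a cofinal set $S\subseteq\N$. Then $Ch_{\L_d}(M_d)=0$ for each $d\in S$, so $(\pi^\L_{\L_d})^{-1}(Ch_{\L_d}(M_d))=\ker(\pi^\L_{\L_d})$. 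Since the kernels $\ker(\pi^\L_{\L_d})$ form a decreasing chain with $\bigcap_d\ker(\pi^\L_{\L_d})=0$ (by the very definition of $\L$ as the inverse limit), cofinality of $S$ forces
$$\wt{Ch}_\L(M)\subseteq\bigcap_{d\in S}\ker(\pi^\L_{\L_d})=\bigcap_d\ker(\pi^\L_{\L_d})=0.$$

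I expect the main obstacle to lie in Part 1: the descent of exactness from the $\L$-level to each $\L_d$-level is not automatic, since the natural functor $M\mapsto M_d$ is in general only right-exact. Short exactness at each $\L_d$-level must therefore be part of the compatibility built into the inverse system, or verified directly in the application at hand (as is standard in Iwasawa theory). A secondary, more technical subtlety in Part 2 is the existence of a literal coherent sequence of generators underlying Theorem \ref{MainThm}: the hypotheses provide only the inclusion $\pi^{\L_d}_{\L_{d-1}}(Ch_{\L_d}(M_d))\subseteq Ch_{\L_{d-1}}(M_{d-1})$, and producing coherent lifts relies on the principal nature of these ideals together with the surjectivity of the projections $\L_d\sri\L_{d-1}$.
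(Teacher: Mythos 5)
Your proposal matches the paper's argument: part 1 is proved there by descending to the induced exact sequences of $\L_d$-modules at each finite level, invoking the classical multiplicativity of $Ch_{\L_d}$, and passing to the limit via Theorem \ref{MainThm}, while part 2 is dismissed in the paper as ``obvious'', and your argument (nonvanishing from the coherent generators, vanishing from $\bigcap_d\ker(\pi^\L_{\L_d})=0$ when $Ch_{\L_d}(M_d)=0$ cofinally) is precisely the intended justification. The two caveats you flag --- exactness at each finite level being part of the data rather than automatic, and the passage from the inclusions $\pi^{\L_d}_{\L_{d-1}}(Ch_{\L_d}(M_d))\subseteq Ch_{\L_{d-1}}(M_{d-1})$ to actual coherent generators --- are genuine, but they are equally left implicit in the paper's own proof, so your treatment is if anything slightly more careful.
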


\begin{proof}
{\bf 1.} For any $d\geq 0$ we have exact sequences (arising from \eqref{MTEq1}\,)
\[ \xymatrix{ M'_d \ar@{^(->}[r] & M_d \ar@{->>}[r] & M''_d } \ , \]
for which the equality $Ch_{\L_d}(M_d)=Ch_{\L_d}(M'_d)Ch_{\L_d}(M''_d)$ holds. The previous theorem allows to take limits
on both sides maintaining the equality.

\noindent {\bf 2.} Obvious.
\end{proof}

\begin{rem}\label{RemMainThm}
In the previous corollary it is enough to assume that $M'$ and $M''$ verify the hypotheses of Theorem \ref{MainThm}. Indeed,
using the snake lemma exact sequence
\[ (M'_d)_{t_d}\iri (M_d)_{t_d} \rightarrow (M''_d)_{t_d} \rightarrow M'_d/t_dM'_d \rightarrow M_d/t_dM_d
\sri M''_d/t_dM''_d \ , \]
one immediately has that
\[ (M'_d)_{t_d}\ {\rm and}\ (M''_d)_{t_d} \sim_{\L_{d-1}}0 \Longrightarrow (M_d)_{t_d} \sim_{\L_{d-1}}0 \]
and
\[ \begin{array}{lcl} Ch_{\L_{d-1}}(M_d/t_dM_d) & = & Ch_{\L_{d-1}}(M'_d/t_dM'_d)Ch_{\L_{d-1}}(M''_d/t_dM''_d) \\
\ & \subseteq & Ch_{\L_{d-1}}(M'_{d-1})Ch_{\L_{d-1}}(M''_{d-1}) = Ch_{\L_{d-1}}(M_{d-1})\ . \end{array}\]
\end{rem}

\section{Class groups in global fields}\label{GlobalClGrSec}
For the rest of the paper we adjust our notations a bit to be more consistent with the usual ones in Iwasawa theory.
We fix a prime number $p$ and a global field $F$ (note that for now we are not making any assumption on the
characteristic of $F$). For any finite extension $E/F$ let $\calm(E)$ be the $p$-adic completion of the group of divisor classes of $E$, i.e.,
\[ \calm(E):=(E^*\backslash\idl_E/\Pi_v \ol_{E_v}^*) \otimes \Z_p \]
where $\idl_E$ is the group of finite ideles of $E$, $v$ varies over all non-archimedean places of $E$ and
$\ol_{E_v}$ is the ring of integers of the completion of $E$ at $v$. When $\call/F$ is an infinite extension,
we put $\calm(\call):=\liminv\calm(E)$ as $E$ runs among finite subextensions of $\call/F$ (the limit being taken with respect
to norm maps). Class field theory yields a canonical isomorphism
\begin{equation} \label{ArtinMapEq} \calm(E)\stackrel{\sim}{\longrightarrow} X(E):=\Gal(L(E)/E) \ ,\end{equation}
where $L(E)$ is the maximal unramified abelian pro-$p$-extension of $E$. Passing to the limit shows that \eqref{ArtinMapEq} is
still true for infinite extensions.

\noindent Finally, for any infinite Galois extension $\call/F$, let $\L(\call):=\Z_p[[\Gal(\call/F)]]$ be the associated Iwasawa algebra.
We shall be interested in the situation where $\Gal(\call/F)$ is an abelian $p$-adic Lie group: in this case, both $\calm(\call)$
and $X(\call)$ are $\L(\call)$-modules (the action of $\Gal(\call/F)$ on $X(\call)$ is the natural one via inner automorphisms of
$\Gal(L(\call)/F)\,$) and these structures are compatible with the isomorphism \eqref{ArtinMapEq}.
Furthermore, if $\Gal(\call/F)\simeq\Z_p^d$ then $\L(\call)\simeq\Z_p[[t_1,..,t_d]]$ is a Krull domain.

\begin{lem} \label{RamLemma}
Let $\calf/F$ be a $\Z_p^d$-extension, ramified only at finitely many places. If $d>2$, one can always find a $\Z_p$-subextension
$\calf_1/F$ such that none of the ramified places splits completely in $\calf_1$.
\end{lem}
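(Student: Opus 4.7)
The plan is to parametrize the $\Z_p$-subextensions of $\calf/F$ by surjective continuous homomorphisms and then reduce the problem to linear algebra over $\Q_p$. Set $\Gamma := \Gal(\calf/F) \simeq \Z_p^d$. Every $\Z_p$-subextension $\calf_1 \subseteq \calf$ arises as $\calf^{\Ker \chi}$ for a surjective $\chi : \Gamma \to \Z_p$, unique up to $\Z_p^*$. Since $\Gamma$ is abelian, each ramified place $v_i$ ($i=1,\dots,r$) has a well-defined decomposition subgroup $D_i \subseteq \Gamma$, and non-trivial inertia forces $D_i \neq 0$; as a closed $\Z_p$-submodule of $\Z_p^d$ it is $\Z_p$-free of some rank $k_i \geq 1$. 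The basic reformulation I will use is that $v_i$ splits completely in $\calf_1$ if and only if the image of $D_i$ in $\Gal(\calf_1/F)$ is trivial, equivalently $D_i \subseteq \Ker\chi$, equivalently $\chi(D_i) = 0$. The passage from finite layers to the infinite $\calf_1/F$ for the notion of complete splitting is harmless and just uses that $v_i$ splits completely in $\calf_1/F$ iff it does so in every finite intermediate extension.

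The task thus becomes: find a surjective $\chi$ with $\chi(D_i) \neq 0$ for every $i$. I would view $\chi$ inside $\Hom_{\Z_p}(\Gamma,\Z_p) \otimes_{\Z_p} \Q_p \simeq \Q_p^d$. For each $i$, the vanishing condition $\chi(D_i) = 0$ cuts out a $\Q_p$-subspace of codimension $k_i \geq 1$, hence a proper subspace of $\Q_p^d$; so the set of ``bad'' $\chi$'s is the union of $r$ proper linear subspaces. Because $\Q_p$ is an infinite field, such a finite union cannot cover $\Q_p^d$, and I can pick a non-zero $\chi_0 \in \Q_p^d$ outside every one of them. Multiplying by a suitable power of $p$ turns $\chi_0$ into a primitive element of $\Hom(\Gamma,\Z_p)$, equivalently a surjection $\chi : \Gamma \to \Z_p$; the rescaling does not change the kernel as a $\Q_p$-subspace, so the non-vanishing conditions $\chi(D_i) \neq 0$ are preserved. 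Setting $\calf_1 := \calf^{\Ker \chi}$ then yields the required subextension.

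No step here strikes me as a genuine obstacle: the proof reduces to the elementary fact that a finite union of proper $\Q_p$-linear subspaces of $\Q_p^d$ cannot exhaust it. In particular the argument works for every $d \geq 2$ (and $d=1$ is vacuous), so the strict inequality $d>2$ does not appear to be used in the proof itself; I would expect it to be imposed only for the arithmetic application that follows the lemma.
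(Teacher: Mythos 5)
Your proof is correct and follows essentially the same route as the paper: identify the $\Z_p$-subextensions with (surjective) elements of $\Hom(\Gamma,\Z_p)$, note that each ramified place imposes the condition of avoiding the annihilator of $D_v\otimes\Q_p$, which is a proper subspace, and conclude because a $\Q_p$-vector space is not a finite union of proper subspaces. Your added remarks (rescaling to make $\chi$ primitive, and the observation that the argument never uses $d>2$) are accurate but only make explicit what the paper leaves implicit.
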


\begin{proof} Let $S$ denote the set of primes of $F$ which ramify in $\calf$ and, for any place $v$ in $S$ let $D_v\subset\Gal(\calf/F)=:\Gamma$
be the corresponding decomposition group. Getting $\calf_1$ amounts to finding $\alpha\in\Hom(\Gamma,\Z_p)$
such that $\alpha(D_v)\neq0$ for all $v\in S$. By hypothesis, for such $v$'s the vector spaces $D_v\otimes\Q_p$
are non-zero, hence their annihilators are proper subspaces of $\Hom(\Gamma\otimes\Q_p,\Q_p)$ and since a
$\Q_p$-vector space cannot be union of a finite number of proper subspaces, we deduce that the required $\alpha$ exists.
\end{proof}

\noindent The following lemma is mostly a restatement of  \cite[Theorem 1]{Gr1}.

\begin{lem}\label{GGCLemma}
Let $\calf/F$ be a $\Z_p^d$-extension, ramified only at finitely many places, and $\calf'\subset\calf$ a $\Z_p^{d-1}$-subextension, with $d>2$.
Let $I$ be the kernel of the natural projection $\L(\calf)\rightarrow\L(\calf')$. Then $X(\calf)/IX(\calf)$ is a finitely generated
torsion $\L(\calf')$-module and $X(\calf)$ is a finitely generated torsion $\L(\calf)$-module. This holds also for $d=2$, provided
that no ramified place in $\calf/F$ is totally split in $\calf'$.
\end{lem}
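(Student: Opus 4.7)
The plan is to derive both assertions from Greenberg's Theorem 1 in \cite{Gr1}, after verifying its hypotheses via Lemma \ref{RamLemma}, and then to transfer from $X(\calf)$ to the quotient $X(\calf)/IX(\calf)$ by the standard control sequence for unramified Iwasawa class modules. Recall that Greenberg's theorem states: if $\call/F$ is a $\Z_p^e$-extension ramified at only finitely many primes and $\call$ admits a $\Z_p$-subextension in which no ramified prime splits completely, then $X(\call)$ is a finitely generated torsion $\L(\call)$-module.

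For the statement on $X(\calf)$, I would verify this hypothesis directly. When $d>2$, Lemma \ref{RamLemma} produces the required $\Z_p$-subextension $\calf_1\subseteq \calf$; when $d=2$, the additional assumption in the lemma says precisely that $\calf'$ itself plays that role. In either case Greenberg's theorem applies and gives finite generation and torsion of $X(\calf)$ over $\L(\calf)$. For the $\L(\calf')$-module $X(\calf)/IX(\calf)=X(\calf)_{\Gal(\calf/\calf')}$, I would write down the standard control exact sequence comparing it with $X(\calf')$: the natural map onto $X(\calf')$ is surjective, and its kernel is assembled from the (finitely many) inertia and decomposition data of primes ramifying in $\calf/\calf'$, each contributing a cyclic and hence finitely generated torsion $\L(\calf')$-module. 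Repeating the Greenberg verification for $\calf'/F$ then shows that $X(\calf')$ is also finitely generated torsion over $\L(\calf')$, and combining with the kernel information yields the claim for $X(\calf)/IX(\calf)$.

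The main obstacle is the auxiliary step of securing Greenberg's hypotheses for $\calf'$ in low dimensions (specifically when $d-1\leq 2$): one must find a $\Z_p$-subextension of $\calf'$ in which no prime ramified in $\calf'$ splits completely. For $d-1>2$ this is immediate from Lemma \ref{RamLemma} applied inside $\calf'$; for $d-1\leq 2$ it requires a small variant of that lemma, using the fact that primes ramified in $\calf'$ have non-trivial decomposition groups in $\Gal(\calf'/F)$, so the same vector-space argument in $\Hom(\Gal(\calf'/F)\otimes\Q_p,\Q_p)$ still produces the desired homomorphism $\alpha$. Once this technical point is settled, the full statement follows by routine assembly of Greenberge's theorem and the control sequence.
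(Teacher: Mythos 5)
Your overall route is the same as the paper's: the printed proof is precisely ``proceed as in Greenberg's Theorem 1'', i.e.\ choose a filtration $F\subset\calf_1\subset\dots\subset\calf_{d-1}=\calf'\subset\calf$ whose first layer has the no-complete-splitting property (secured by Lemma \ref{RamLemma} for $d>2$, by the extra hypothesis for $d=2$), prove the base case $X(\calf_1)\in{\bf Fgt}_{\L(\calf_1)}$ by Iwasawa's classical argument, and climb the tower by alternating the control step ($X(\calf_{i-1})$ finitely generated torsion $\Rightarrow$ $X(\calf_i)/I^i_{i-1}X(\calf_i)$ finitely generated torsion) with the Nakayama-type step ($M/IM$ finitely generated torsion over $\L(\calf_{i-1})$ $\Rightarrow$ $M$ finitely generated torsion over $\L(\calf_i)$). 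Your ``apply Greenberg twice and then run the control sequence once more'' is the same induction, just packaged differently, and your remark that the vector-space argument of Lemma \ref{RamLemma} applies to $\calf'/F$ in any dimension (because a place ramified in $\calf'$ automatically has non-trivial decomposition group there) is correct and genuinely needed.

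The one real gap is in your treatment of the kernel of $X(\calf)/IX(\calf)\rightarrow X(\calf')$: ``cyclic and hence finitely generated torsion'' is a non sequitur, since a cyclic $\L(\calf')$-module need not be torsion ($\L(\calf')$ itself is cyclic). The contribution of the places of $\calf'$ lying over a fixed ramified place $v$ of $F$ is generated by a single inertia element on which $\Gal(\calf'/F)$ acts through the coset space $\Gal(\calf'/F)/D_v$; it is therefore a quotient of $\Z_p[[\Gal(\calf'/F)/D_v]]$, which is always finitely generated but is $\L(\calf')$-torsion precisely when $D_v\neq 1$, i.e.\ when $v$ does not split completely in $\calf'$. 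This is the unique point where the hypothesis of the lemma (no complete splitting in $\calf_1\subseteq\calf'$, resp.\ in $\calf'$ itself when $d=2$) actually does work, and your write-up never deploys it there. Relatedly, ``the (finitely many) primes ramifying in $\calf/\calf'$'' must be read as finitely many places of $F$: a single such place may have infinitely many places of $\calf'$ above it, and the finite generation of the kernel over $\L(\calf')$ comes from the transitivity of the Galois action on each fibre, not from literal finiteness. Once these two points are made explicit, your argument coincides with the paper's.
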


\begin{proof} The idea is to proceed by induction on $d$. Choose a filtration
$$F=:\calf_0\subset\calf_1\subset\dots\subset\calf_{d-1}:=\calf'\subset\calf_d:=\calf$$
so that $\Gal(\calf_i/\calf_{i-1})\simeq\Z_p$ for all $i$ and no ramified place in $\calf/F$ is totally split in $\calf_1$
(by Lemma \ref{RamLemma}, this can always be achieved when $d>2$).

\noindent Now one proceeds as in \cite[Theorem 1]{Gr1}. Namely, a standard argument yields that a $\L(\calf_i)$-module $M$ is in
${\bf Fgt}_{\L(\calf_i)}$ if $M/I^i_{i-1}M$ is in ${\bf Fgt}_{\L(\calf_{i-1})}$ (where $I^i_{i-1}$ is the kernel of the
projection $\L(\calf_i)\rightarrow\L(\calf_{i-1})$\,) and Greenberg's proof shows that $X(\calf_{i-1})\in {\bf Fgt}_{\L(\calf_{i-1})}$
implies $X(\calf_i)/I^i_{i-1}X(\calf_i)\in {\bf Fgt}_{\L(\calf_{i-1})}$. So it is enough to prove that $X(\calf_1)$ is a finitely
generated torsion $\L(\calf_1)$-module. This follows from Iwasawa's classical proof (\cite{Iw}, exposed e.g. in \cite{Se};
the function field version can be found in \cite{lz}).
\end{proof}

\begin{rems}
\begin{itemize}  \item[{}]
\item[{\bf 1.}] In a $\Z_p$-extension of a global field, only places with residual characteristic $p$ can ramify: thus the finiteness
hypothesis on the ramification locus is automatically satisfied unless $char(F)=p$. Note, however, that in the latter case this
hypothesis is needed (see, e.g. \cite[Remark 4]{GK}).
\item[{\bf 2.}] Among all $\Z_p$-extensions of $F$ there is a distinguished one, namely, the cyclotomic $\Z_p$-extension $F_{cyc}$ if
$F$ is a number field and the arithmetic $\Z_p$-extension $F_{arit}$ (arising from the unique $\Z_p$-extension of the constant field)
if $F$ is a function field. The condition on $\calf'$ (when $d=2$) is satisfied if it contains either $F_{cyc}$ or $F_{arit}$.
\item[{\bf 3.}] For $d=1$, we have $\calf'=F$ and $\L(\calf')=\Z_p$. Thus the analogue of Lemma \ref{GGCLemma} would state that
$X(\calf)/IX(\calf)$ is finite. This holds quite trivially if $F$ is a global function field and $\calf=F_{arit}$ (note also that
if $char(F)=\ell\neq p$ then $F_{arit}$ is the only $\Z_p$-extension of $F$, see e.g.~\cite[Proposition 4.3]{BL2}). In this case
the maximal abelian extension of $F$ contained in $L(\calf)$ is exactly $L(F)$, hence $X(\calf)/IX(\calf)\simeq \Gal(L(F)/F_{arit})$
which is known (e.g. by class field theory) to be finite.
\end{itemize}
\end{rems}

\subsection{Iwasawa theory for class groups in function fields}\label{ClGrSec}
In this section $F$ will be a global function field of characteristic $p$ and $F_{arit}$ its arithmetic $\Z_p$-extension as defined above.
Let $\calf/F$ be a $\Z_p^\infty$-extension unramified outside a finite set of places $S$, with $\G:=\Gal(\calf/F)$ and associated Iwasawa
algebra $\L:=\L(\calf)$. We fix a $\Z_p$-basis $\{\gamma_i\}_{i\in\N}$ for $\G$ and for any $d\geq 0$ we let $\calf_d\subset\calf$ be the
fixed field of $\{\gamma_i\}_{i>d}$. Also, we assume that our basis is such that no place in $S$ splits completely in $\calf_1$
(Lemma \ref{RamLemma} shows that there is no loss of generality in this assumption).

\begin{rem}
If $\calf$ contains $F_{arit}$ we can take the latter as $\calf_1$. The additional hypothesis on $\calf_1$ appears also in
\cite[Theorem 1.1]{KLT1}: the authors enlarge the set $S$ and the extension $\calf_d$ in order to get a
$\Z_p$-extension verifying that hypothesis and use this to get a monomial Stickelberger element. This is a crucial step in the
proof of the Main Conjecture provided in \cite{blt09A}.
\end{rem}

\noindent For notational convenience, let $t_i:=\gamma_i-1$. Then the subring  $\Z_p[[t_1,\dots, t_d]]$ of $\L$ is canonically isomorphic
to $\L(\calf_d)$ and, by a slight abuse of notation, the two shall be identified in the following. In particular, for any
$d\geq1$ we have $\L(\calf_d)=\L(\calf_{d-1})[[t_d]]$ and we can apply the results of Section \ref{CharIdSec}.
We shall denote by $\pi^d_{d-1}$ the canonical projection $\L(\calf_d)\rightarrow\L(\calf_{d-1})$ with kernel $I^d_{d-1}=(t_d)$
(the augmentation ideal of $\calf_d/\calf_{d-1}\,$) and by $\G^d_{d-1}$ the group $\Gal(\calf_d/\calf_{d-1})$.

For two finite extensions $L\supset L'\supset F$, the degree maps $\deg_L$ and $\deg_{L'}$ fit into the commutative diagram
(with exact rows)
\begin{equation}\label{CommDiagDeg}
\xymatrix{\cala(L)\ar@{^(->}[r] \ar[d]^{N^L_{L'}}& \calm(L) \ar@{->>}[rr]^{\deg_L}\ar[d]^{N^L_{L'}} & & \Z_p \ar[d] \\
\cala(L')\ar@{^(->}[r] & \calm(L') \ar@{->>}[rr]^{\deg_{L'}} & & \Z_p\,, } \end{equation}
where $N^L_{L'}$ denotes the norm and the vertical map on the right is multiplication by $[\mathbb{F}_L:\mathbb{F}_{L'}]$
(the degree of the extension between the fields of constants). For an infinite extension $\call/F$ contained in $\calf$, taking
projective limits one gets an exact sequence
\begin{equation}\label{degseq}
\xymatrix{ \cala(\call)\ar@{^(->}[r] & \calm(\call) \ar@{->}[rr]^{\deg_{\call}} & & \Z_p } \ .
\end{equation}

\begin{rem} The map $\deg_{\call}$ above becomes zero exactly when $\call$ contains the unramified $\Z_p$-subextension $F_{arit}$. \end{rem}

\noindent By \eqref{ArtinMapEq}, Lemma \ref{GGCLemma} shows that $\calm({\calf_d})$ is a finitely generated torsion
$\L(\calf_d)$-module, so the same holds for $\cala(\calf_d)$. Hence, by Proposition \ref{CharId1}, one has, for all $d\geq1$,
\begin{equation}\label{CharIdA0} Ch_{\L(\calf_{d-1})}(\cala(\calf_d)_{t_d})\,\pi^d_{d-1}(Ch_{\L(\calf_d)}(\cala(\calf_d))) =
Ch_{\L(\calf_{d-1})}(\cala(\calf_d)/t_d\cala(\calf_d))
\end{equation}
and note that
\[ \cala(\calf_d)_{t_d}=\cala(\calf_d)^{\G^d_{d-1}}\ \ ,\ \  \cala(\calf_d)/t_d\cala(\calf_d)=
\cala(\calf_d)/I^d_{d-1}\cala(\calf_d)\ .\]

\noindent Consider the following diagram
\begin{equation}
\xymatrix{ \cala(\calf_d) \ar@{^(->}[r]\ar[d]^{t_d} & \calm(\calf_d) \ar@{->>}[r]^{\deg}\ar[d]^{t_d} &
\Z_p \ar[d]^{t_d} \\
\cala(\calf_d) \ar@{^(->}[r] & \calm(\calf_d) \ar@{->>}[r]^{\deg} & \Z_p }\end{equation}
(note that the vertical map on the right is 0) and its snake lemma sequence
\begin{equation} \label{snake0}
\xymatrix{ \cala(\calf_d)^{\G^d_{d-1}} \ar@{^(->}[r] & \calm(\calf_d)^{\G^d_{d-1}} \ar[r]^{\deg} &
\Z_p \ar[d] \\
\Z_p & \calm(\calf_d)/I^d_{d-1}\calm(\calf_d) \ar@{->>}[l]_{\deg\ \quad\ } &  \cala(\calf_d)/I^d_{d-1}\cala(\calf_d) \ar[l]\ .}
\end{equation}

\noindent For $d\ge 2$ (which implies that $\Z_p$ is a torsion $\L(\calf_{d-1})$-module), \eqref{snake0} and Lemma \ref{GGCLemma} show
that $\cala(\calf_d)/I^d_{d-1}\cala(\calf_d)$ is in ${\bf Fgt}_{\L(\calf_{d-1})}$ as well. By Proposition \ref{CharId1} it
follows that no term in \eqref{CharIdA0} is trivial.

\subsubsection{Totally ramified extensions and the Main Conjecture} \label{PrRamCGSubs}
The main examples we have in mind are extensions satisfying the following

\begin{ass}\label{AssRam} The (finitely many) ramified places of $\calf/F$ are totally ramified. \end{ass}

\noindent In what follows an extension satisfying this assumption will be called a {\it totally ramified extension}.
A prototypical example is the $\mathfrak{a}$-cyclotomic extension of $\F_q(T)$ generated by the $\mathfrak{a}$-torsion of the Carlitz
module ($\mathfrak{a}$ an ideal of $\F_q[T]$, see e.g. \cite[Chapter 12]{Ro}).
As usual in Iwasawa theory over number fields, most of the proofs will work (or can be adapted) simply assuming that ramified primes
are totally ramified in $\calf/\calf_e$ for some $e\geq 0$, but, in the function field setting, one would need
some extra hypothesis on the behaviour of these places in $\calf_e/F$.

Under this assumption any $\Z_p$-subextension can play the role of $\calf_1\,$. Moreover $\calm(\calf)$ is defined using norm maps and
norms are surjective on class groups in totally ramified extensions, so
\[ \calm(\calf_d)=N^\calf_{\calf_d}(\calm(\calf)):=\calm(\calf)_d \ \ {\rm and}\ \ \calm(\calf)=\il{d} \calm(\calf)_d=
\il{d} \calm(\calf_d) \]
(in the notations of Theorem \ref{MainThm}). The same holds for the modules $\cala(\calf)$ and $\cala(\calf_d)$.

Let $L^0(\calf_{d-1})$ be the maximal abelian extension of $\calf_{d-1}$ contained in $L(\calf_d)$, we
have
\[ \calf_d L(\calf_{d-1})\subseteq L^0(\calf_{d-1})\ \ {\rm and}\ \
\Gal(L(\calf_d)/L^0(\calf_{d-1}))=I^d_{d-1} \calm(\calf_d) \]
(see \cite[Lemma 13.14]{wash}). Galois theory provides a surjection
\[ \Gal(L^0(\calf_{d-1})/\calf_d) \sri \Gal(\calf_dL(\calf_{d-1})/\calf_d)\ ,\]
i.e.,
\[ \calm(\calf_d)/I^d_{d-1} \calm(\calf_d) \sri \calm(\calf_{d-1}) \ ,\]
which yields
\begin{equation}\label{Hyp2Eq}
Ch_{\L_{d-1}}(\calm(\calf_d)/I^d_{d-1}\calm(\calf_d))\subseteq Ch_{\L_{d-1}}(\calm(\calf_{d-1}))\ .
\end{equation}
The same relation holds for the characteristic ideals of the $\cala(\calf_d)$ for $d\geq 3$, because of \eqref{snake0}.
In particular if we have only one ramified prime, the surjection above is an isomorphism (just adapt the proof of \cite[Lemma 13.15]{wash})
and \eqref{Hyp2Eq} is an equality. This takes care of hypothesis {\bf 2} in Theorem \ref{MainThm}.

A little modification of the proof of \cite[Lemma 5.7]{BBL} (note that \cite[Lemmas 5.4 and 5.6]{BBL} still hold
in the present setting), shows that elements of $\calm(\calf_d)^{\G^d_{d-1}}$ are represented by divisors supported on ramified primes.
Hence $\calm(\calf_d)^{\G^d_{d-1}}$ (and $\cala(\calf_d)^{\G^d_{d-1}}\,$) are finitely generated $\Z_p$-modules, i.e.,
pseudo-null $\L(\calf_{d-1})$-modules for $d\geq 3$. From \eqref{CharIdA0} we obtain

\begin{cor}\label{ClGrFinCor}
Let $\calf_d$ be a $\Z_p^d$-extension of $F$ contained in a totally ramified extension. Then, for any
$\Z_p^{d-1}$-extension $\calf_{d-1}$ contained in $\calf_d\,$, one has
\begin{equation}\label{CharIdA1}
\begin{array}{lcl} \pi^d_{d-1}(Ch_{\L(\calf_d)}(\cala(\calf_d))) & = &
Ch_{\L(\calf_{d-1})}(\cala(\calf_d)/I^d_{d-1}\cala(\calf_d)) \\
\ & \subseteq & Ch_{\L_{d-1}}(\cala(\calf_{d-1})) \ .\end{array} \end{equation}
\end{cor}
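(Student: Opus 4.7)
The plan is to apply the general descent identity \eqref{CharIdA0} with $A=\L(\calf_{d-1})$ and $B=\L(\calf_d)=A[[t_d]]$, and then to show that the factor $Ch_{\L(\calf_{d-1})}(\cala(\calf_d)^{\G^d_{d-1}})$ equals the unit ideal, so that \eqref{CharIdA0} collapses to the first identity of \eqref{CharIdA1}. The inclusion in the second line will then be deduced from the snake-lemma sequence \eqref{snake0} together with \eqref{Hyp2Eq}, which has already been established for the modules $\calm$.

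For the equality, the key observation (adapting \cite[Lemma 5.7]{BBL}, as recalled in the paragraph preceding the statement) is that, under Assumption \ref{AssRam}, every class in $\calm(\calf_d)^{\G^d_{d-1}}$ is represented by a divisor supported on the finite set of primes that ramify in $\calf_d/\calf_{d-1}$. Hence $\calm(\calf_d)^{\G^d_{d-1}}$, and a fortiori its submodule $\cala(\calf_d)^{\G^d_{d-1}}$, is finitely generated over $\Z_p$. Since $\L(\calf_{d-1})$ has Krull dimension $d$ and the $\Z_p$-action forces the annihilator to contain an ideal of height $d-1$, we conclude $\cala(\calf_d)^{\G^d_{d-1}}\sim_{\L(\calf_{d-1})}0$ as soon as $d\geq 3$. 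Plugging this into \eqref{CharIdA0} yields the first identity of \eqref{CharIdA1}.

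For the inclusion, read off from \eqref{snake0} that the kernel and cokernel of the natural map $\cala(\calf_d)/I^d_{d-1}\cala(\calf_d)\to\calm(\calf_d)/I^d_{d-1}\calm(\calf_d)$ are subquotients of $\Z_p$, hence pseudo-null over $\L(\calf_{d-1})$ by the same dimension count. By the multiplicativity of characteristic ideals this gives
\[ Ch_{\L(\calf_{d-1})}(\cala(\calf_d)/I^d_{d-1}\cala(\calf_d))=Ch_{\L(\calf_{d-1})}(\calm(\calf_d)/I^d_{d-1}\calm(\calf_d))\,, \]
and, similarly via \eqref{degseq}, $Ch_{\L(\calf_{d-1})}(\cala(\calf_{d-1}))=Ch_{\L(\calf_{d-1})}(\calm(\calf_{d-1}))$. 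Combining these equalities with the known inclusion \eqref{Hyp2Eq} for $\calm$, which itself comes from the Galois-theoretic surjection $\calm(\calf_d)/I^d_{d-1}\calm(\calf_d)\sri\calm(\calf_{d-1})$, yields the second line of \eqref{CharIdA1}. The main obstacle is essentially bookkeeping: one must stay in the regime $d\geq 3$ throughout so that the various $\Z_p$-modules appearing from ramification and from the degree sequence are genuinely pseudo-null over $\L(\calf_{d-1})$; for $d=2$ the argument would need an ad hoc input since $\Z_p$-modules drop only one codimension in $\L(\calf_1)$.
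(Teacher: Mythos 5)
Your proof is correct and follows essentially the same route as the paper: the paper likewise obtains the equality by observing that $\cala(\calf_d)^{\G^d_{d-1}}$ is finitely generated over $\Z_p$ (via the ramified-divisor argument adapted from \cite[Lemma 5.7]{BBL}), hence pseudo-null over $\L(\calf_{d-1})$ for $d\geq 3$, so that \eqref{CharIdA0} collapses, and it deduces the inclusion from \eqref{Hyp2Eq} transported from $\calm$ to $\cala$ by means of \eqref{snake0}. Your explicit bookkeeping of the $d\geq 3$ restriction matches the condition stated in the paragraph preceding the corollary.
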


Hence the modules $\cala(\calf_d)$ verify the hypotheses of Theorem \ref{MainThm} and we can define

\begin{df}\label{CharIdClGr} Let $\mathcal{F}/F$ be a totally ramified $\Z_p^\infty$-extension.
The \emph{pro-characteristic ideal} of $\cala(\calf)$ is
\[ \widetilde{Ch}_{\Lambda}(\cala(\calf)):=\il{\calf_d}\, (\pi_{\calf_d})^{-1}(Ch_{\Lambda(\calf_d)}(\cala(\calf_d)))\ .\]
\end{df}

\begin{rem}
Definition \ref{CharIdClGr} only depends on the extension $\calf/F$ and not on the filtration of $\Z_p^d$-extension we choose inside it.
Indeed take two different filtrations $\{\calf_d\,\}$ and $\{\calf'_d\,\}$ and define a new filtration containing both by putting
\[ \calf''_0:=F\quad{\rm and}\quad \calf''_n=\calf_n\calf'_n \quad \forall\,n\ge 1 \]
(note that $\calf''_n$ is not, in general, a $\Z_p^n$-extension and $\calf''_n/\calf''_{n-1}$ is a $\Z_p^i$-extension with $0\le i \le 2$,
but these details are irrelevant for the limit process we need here). By Corollary \ref{ClGrFinCor}, the limits of the
characteristic ideals of the filtrations we started with coincide with the limit on the filtration $\{\calf''_n\,\}$.
This answers questions $a$ and $b$ of \cite[Remark 5.11]{BBL}: we had a similar definition there but it was based on
the particular choice of the filtration.
\end{rem}

We recall that, in \cite[Theorem 3.1]{blt09} (and \cite{blt09A}), the authors prove an Iwasawa Main Conjecture (IMC)
at ``finite level'', which (in our simplified setting and notations) reads as

\begin{equation}\label{MCFinLev}
Ch_{\Lambda(\calf_d)}(\cala(\calf_d)) = (\theta_{\calf_d/F,S}) \ ,\end{equation}
where $\theta_{\calf_d/F,S}$ is the classical Stickelberger element (defined e.g. in \cite[Section 5.3]{BBL}).
By \cite[Proposition IV.1.8]{tate}, the elements $\theta_{\calf_d/F,S}$ form a coherent sequence with respect to the maps
$\pi^d_e\,$, so, taking inverse limits in \eqref{MCFinLev}, one obtains

\begin{cor}[IMC in non-noetherian algebras]\label{IMCClassGr}
In the previous setting we have
\[ \widetilde{Ch}_{\L}(\cala(\calf)) = \il{\calf_d}\, (\theta_{\calf_d/F,S}) :=(\theta_{\calf/F,S})  \ ,\]
as ideals in $\Lambda$.
\end{cor}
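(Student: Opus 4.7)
The plan is to combine the finite-level Main Conjecture \eqref{MCFinLev} with the coherence of Stickelberger elements under the projection maps $\pi^d_{d-1}$, and then pass to the inverse limit within the framework of Theorem \ref{MainThm}. The preliminary bookkeeping is already done: Corollary \ref{ClGrFinCor} has verified the hypotheses of Theorem \ref{MainThm} for the family $\{\cala(\calf_d)\}$, so $\widetilde{Ch}_\L(\cala(\calf))$ is well defined and the characteristic ideals $Ch_{\L(\calf_d)}(\cala(\calf_d))$ form a coherent (decreasing) system of principal ideals under the maps $\pi^d_{d-1}$.

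The core input I would invoke is \cite[Proposition IV.1.8]{tate}, which upgrades the coherence of the ideals in \eqref{MCFinLev} to coherence of their distinguished generators: $\pi^d_{d-1}(\theta_{\calf_d/F,S})=\theta_{\calf_{d-1}/F,S}$ for every $d\geq 1$. Hence the family $\{\theta_{\calf_d/F,S}\}$ specifies a single element $\theta_{\calf/F,S}\in \L=\il{d}\L(\calf_d)$, whose image in each $\L(\calf_d)$ is the finite-level Stickelberger element. This is exactly the element we want to produce as a generator.

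It remains to identify the pro-characteristic ideal with $(\theta_{\calf/F,S})$. The inclusion $(\theta_{\calf/F,S})\subseteq\widetilde{Ch}_\L(\cala(\calf))$ is immediate, since any multiple $\alpha\,\theta_{\calf/F,S}$ has $\pi^\L_{\L_d}$-image lying in $(\theta_{\calf_d/F,S})=Ch_{\L(\calf_d)}(\cala(\calf_d))$ for every $d$. For the converse inclusion, given $g$ in the pro-characteristic ideal I would exploit the fact that each $\L(\calf_d)$ is an integral domain and $\theta_{\calf_d/F,S}\neq 0$ (so that the finite-level characteristic ideals are nonzero, i.e.\ the $\cala(\calf_d)$ are genuinely torsion): this yields a \emph{unique} $h_d\in\L(\calf_d)$ with $\pi^\L_{\L_d}(g)=h_d\,\theta_{\calf_d/F,S}$. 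The uniqueness combined with the coherence of $\{\theta_{\calf_d/F,S}\}$ forces the $h_d$ to be coherent as well, producing an element $h\in\L$ with $g=h\,\theta_{\calf/F,S}$.

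The only genuinely non-formal ingredient is the coherence of the Stickelberger elements themselves, supplied by Tate's \cite[Proposition IV.1.8]{tate}; I do not foresee any real obstacle, because once that compatibility is in hand the rest is a standard inverse-limit argument in an inverse system of integral domains where each generator is a non-zero-divisor.
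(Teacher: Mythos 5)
Your proposal is correct and follows essentially the same route as the paper: the finite-level Main Conjecture \eqref{MCFinLev} combined with the coherence of the Stickelberger elements supplied by \cite[Proposition IV.1.8]{tate}, followed by passage to the inverse limit. The paper leaves the limit identification implicit (``taking inverse limits in \eqref{MCFinLev}''), and your explicit two-inclusion argument---using that each $\L(\calf_d)$ is a domain and each $\theta_{\calf_d/F,S}$ is nonzero to get coherent cofactors $h_d$---just spells out that step.
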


\noindent More details on the statement and its proof (now independent from the filtration $\{\calf_d\}_{d\geq 0}\,$)
can be found in \cite[Section 5]{BBL}.

\begin{rem}
A different approach, using a more natural filtration of global function fields for the Carlitz $\pr$-cyclotomic extension
of $\F_q(T)$ and Fitting ideals of class groups, will be carried out in \cite{ABBL}. It leads to a similar version
of the Iwasawa Main Conjecture in the algebra $\L$, but it has the advantage of having more direct and relevant
arithmetic applications (see \cite[Section 6]{ABBL}).
\end{rem}

\frenchspacing

\end{document}